\newtheorem{thm}{Theorem}[section]
\newtheorem{lem}[thm]{Lemma}
\newtheorem{cor}[thm]{Corollary}
\newtheorem{eg}[thm]{Example}
\newcommand{\Rmnum}[1]{\expandafter\@slowromancap\romannumeral #1@} 
\begin{document}

\title{{Matching anti-forcing polynomials of catacondensed hexagonal systems}}
\author{Shuang Zhao$^{1,2}$\\
{\small $^{1}$School of Information Engineering, Lanzhou University of Finance and Economics, }\\
{\small Lanzhou, Gansu 730000, P. R. China;}\\
{\small E-mail: zhaosh2018@126.com}}
\date{}
\maketitle

\begin{abstract}
    In this paper, we derive a recurrence relation of anti-forcing polynomial for catacondensed hexagonal systems.
    \vskip 0.2in \noindent \textbf{Keywords}: Perfect matching; Anti-forcing number; Anti-forcing polynomial; Catacondensed hexagonal system.
\end{abstract}

\section{Introduction}
    Let $G$ be a graph with vertex set $V(G)$ and edge set $E(G)$. A \emph{perfect matching} $M$ of $G$ is a set of disjoint edges that covers all vertices of $G$. Lei, Yeh and Zhang \cite{Lei} generalized \emph{anti-forcing number} to single perfect matching, and Klein and Rosenfeld \cite{new} presented the same concept as \emph{(e)-forcing}. An \emph{anti-forcing set} $S$ of $M$ is a subset of $E(G)\backslash M$ such that $G-S$ has a unique perfect matching $M$, where $G-S$ denotes the subgraph obtained from $G$ by deleting the edges that belong to $S$. The \emph{anti-forcing number} of $M$ is the minimum cardinality over all anti-forcing sets of $M$, denoted by $af(G,M)$. The \emph{maximum} (resp. \emph{minimum}) \emph{anti-forcing number} of $G$ is the maximum (resp. minimum) value of $af(G,M)$ over all perfect matchings $M$ of $G$, denoted by $Af(G)$ (resp. $af(G)$). The set of anti-forcing numbers of all perfect matchings of $G$ is called the \emph{anti-forcing spectrum} of $G$.

    A \emph{hexagonal system} is a 2-connected finite plane graph such that every interior face is surrounded by a regular hexagon of side length one. Lei et al. \cite{Lei} showed that the maximum anti-forcing number of a hexagonal system equals the \emph{Fries number}, namely the maximum number of alternating hexagons with respect to a perfect matching, which can measure the stability of a benzenoid hydrocarbon.

    Hwang et al. \cite{lei2} introduced the \emph{anti-forcing polynomial} of a graph $G$ as
    \begin{equation}
        \label{equ1}
        Af(G,x)=\sum_{M\in\mathcal{M}(G)}{{x}^{af(G,M)}}=\sum _{ i=af(G) }^{ Af(G) }{ \upsilon(G,i){ x }^{ i } },
    \end{equation}
    where $\mathcal{M}(G)$ denotes the set of all perfect matchings of $G$, $\upsilon(G,i)$ denotes the number of perfect matchings of $G$ with anti-forcing number $i$. By the definition, we can derive the following lemma, which shows that a disconnected graph can be partitioned into components to consider.

    \begin{lem}
        Let $G$ be a graph with the components $G_1,G_2,\ldots,G_k.$ Then
        \begin{align*}
            Af(G,x)=\prod_{i=1}^{k}{Af({G}_{i},x)}.
        \end{align*}
    \end{lem}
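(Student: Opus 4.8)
The plan is to reduce the identity to the additivity of the anti-forcing number over connected components and then to expand the defining sum \eqref{equ1}. First I would record the obvious correspondence of perfect matchings: since $E(G)$ is the disjoint union $E(G_1)\cup\cdots\cup E(G_k)$ and no edge of $G$ joins two distinct components, every perfect matching $M$ of $G$ splits as $M=M_1\cup\cdots\cup M_k$ with $M_i:=M\cap E(G_i)$ a perfect matching of $G_i$, and conversely any tuple of perfect matchings $M_i\in\mathcal M(G_i)$ glues to a perfect matching of $G$. This yields a bijection $\mathcal M(G)\cong\mathcal M(G_1)\times\cdots\times\mathcal M(G_k)$; in the degenerate case where some $G_i$ is the empty graph, its only perfect matching is $\emptyset$ and the bijection still holds, consistent with the convention $Af(G_i,x)=1$ made after \eqref{equ1}.

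The heart of the argument is the claim $af(G,M)=\sum_{i=1}^{k}af(G_i,M_i)$. For the inequality ``$\ge$'', I would take a minimum anti-forcing set $S$ of $M$ and put $S_i:=S\cap E(G_i)$. Using the elementary fact that a graph has a unique perfect matching if and only if each of its connected components has a unique perfect matching, the uniqueness of $M$ in $G-S$ forces $M_i$ to be the unique perfect matching of $G_i-S_i$ (note $V(G_i)$ is a union of components of $G-S$, so $G_i-S_i$ is a union of components of $G-S$, each with a unique perfect matching); hence each $S_i$ is an anti-forcing set of $M_i$ and $af(G,M)=|S|=\sum_i|S_i|\ge\sum_i af(G_i,M_i)$. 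For ``$\le$'', I would choose minimum anti-forcing sets $S_i$ of $M_i$ in $G_i$ and observe that $G-\bigcup_i S_i$ is the disjoint union of the graphs $G_i-S_i$, each having the unique perfect matching $M_i$, so $\bigcup_i S_i$ is an anti-forcing set of $M$ and $af(G,M)\le\sum_i|S_i|=\sum_i af(G_i,M_i)$.

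Finally, substituting the additivity into the definition and using the bijection above,
\begin{align*}
Af(G,x) &=\sum_{M\in\mathcal M(G)}x^{af(G,M)}
=\sum_{(M_1,\ldots,M_k)}x^{\sum_{i=1}^{k}af(G_i,M_i)}\\
&=\prod_{i=1}^{k}\Bigl(\sum_{M_i\in\mathcal M(G_i)}x^{af(G_i,M_i)}\Bigr)
=\prod_{i=1}^{k}Af(G_i,x),
\end{align*}
where the last factorization is the routine expansion of a product of finitely many finite sums. The only point requiring genuine care, and the step I would single out as the main (if modest) obstacle, is the component-wise characterization of uniqueness of a perfect matching, since it is exactly what makes both directions of the additivity claim go through; the rest is bookkeeping.
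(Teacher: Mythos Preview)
Your proof is correct and is precisely the argument the paper has in mind: the lemma is stated there without proof, prefaced only by ``By the definition of anti-forcing polynomial, we can derive the following lemma,'' and your write-up simply unpacks that derivation---the bijection $\mathcal M(G)\cong\prod_i\mathcal M(G_i)$, the additivity $af(G,M)=\sum_i af(G_i,M_i)$ via component-wise uniqueness, and the routine factorization of the sum \eqref{equ1}. There is nothing to compare; you have supplied the details the paper omits.
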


    From the above lemma, we know that if the anti-forcing spectrum of $G_i$ is an integer interval $[a_i,b_i]$ for $i=1,2,\ldots,k$, then the anti-forcing spectrum of $G$ is $[\sum_{i=1}^{k}a_i,\sum_{i=1}^{k}b_i]$.

\section{Some preliminaries}
    We now introduce some properties of anti-forcing set and anti-forcing number. Let $G$ be a graph with a perfect matching $M$. A cycle of $G$ is called \emph{$M$-alternating} if its edges appear alternately in $M$ and $E(G)\setminus M$.

    \begin{thm}\em\cite{Lei}
        \label{anti-forc-equi-defi}
        Let $G$ be a graph with a perfect matching $M.$ A subset $S\subseteq E(G)\backslash M$ is an anti-forcing set of $M$ if and only if each $M$-alternating cycle of $G$ contains at least one edge of $S.$
    \end{thm}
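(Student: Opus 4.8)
The plan is to reduce the statement to the classical structural fact that a graph $H$ with a perfect matching $M$ admits a perfect matching other than $M$ precisely when $H$ contains an $M$-alternating cycle. First I would record both halves of this fact: given any perfect matching $M'\neq M$ of $H$, the symmetric difference $M\triangle M'$ is a nonempty spanning subgraph in which every vertex has degree $0$ or $2$, hence a vertex-disjoint union of cycles, and along each such cycle the edges alternate between $M$ and $M'$; in particular each of these cycles is $M$-alternating. Conversely, if $C$ is an $M$-alternating cycle of $H$, then $M\triangle E(C)$ is a perfect matching of $H$ distinct from $M$.

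For the ``only if'' direction I would argue by contradiction: suppose $S\subseteq E(G)\setminus M$ is an anti-forcing set, so $M$ is the unique perfect matching of $G-S$, yet some $M$-alternating cycle $C$ of $G$ uses no edge of $S$. Then $C$ lies entirely in $G-S$, and since $S\cap M=\emptyset$ the edges of $C$ still alternate with respect to $M$ inside $G-S$; applying the converse direction above with $H=G-S$ yields the perfect matching $M\triangle E(C)\neq M$ of $G-S$, contradicting uniqueness. Hence every $M$-alternating cycle of $G$ must contain an edge of $S$.

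For the ``if'' direction, assume every $M$-alternating cycle of $G$ meets $S$. Because $S\subseteq E(G)\setminus M$, the matching $M$ survives intact in $G-S$, so $M$ is a perfect matching of $G-S$. If $G-S$ had a second perfect matching $M'$, the forward part of the classical fact (with $H=G-S$) gives an $M$-alternating cycle $C$ contained in $G-S$; but $C$ is then also an $M$-alternating cycle of $G$, so by hypothesis it contains an edge of $S$ — impossible, as $C\subseteq G-S$. Thus $M$ is the unique perfect matching of $G-S$, i.e. $S$ is an anti-forcing set of $M$.

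I do not expect a genuine obstacle here; the content is essentially bookkeeping once the symmetric-difference lemma is in hand. The only points requiring a little care are the compatibility of the notion ``$M$-alternating'' between $G$ and its subgraph $G-S$ (valid because deleting edges outside $M$ neither removes an edge of an alternating cycle that avoids $S$ nor alters which of its edges lie in $M$), and the observation that $M$ itself remains a perfect matching of $G-S$, which is exactly where the hypothesis $S\cap M=\emptyset$ is used.
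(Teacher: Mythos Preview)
Your proof is correct and is the standard symmetric-difference argument. Note, however, that the paper does not prove this statement at all: it is quoted as Theorem~\ref{anti-forc-equi-defi} from \cite{Lei} and used as a black box, so there is no proof in the paper to compare against.
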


    Two $M$-alternating cycles is called \emph{compatible} if they either are disjoint or intersect only at edges in $M$. A set of $M$-alternating cycles of $G$ is called a \emph{compatible $M$-alternating set} if every two members in it are compatible. It is easy to see from the above theorem that the anti-forcing number $af(G,M)$ is bounded below by $c'(G,M)$, the maximum cardinality over all compatible $M$-alternating sets of $G$. Furthermore, Lei et al. \cite{Lei} proved that all planar bipartite graphs satisfy the lower bound with equality, especially for hexagonal systems.

    \begin{thm}\em\cite{Lei}
        \label{anti-forc-mini-max}
        Let $G$ be a planar bipartite graph$.$ Then for each perfect matching $M$ of $G,$ we have $af(G,M)=c'(G,M).$
    \end{thm}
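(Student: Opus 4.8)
The inequality $af(G,M)\ge c'(G,M)$ is the easy half and is already observed above (the anti-forcing edges used by the distinct cycles of a compatible $M$-alternating set must themselves be pairwise distinct, because two compatible cycles meet only in edges of $M$ while an anti-forcing set avoids $M$). So the real task is the reverse inequality: to produce an anti-forcing set of $M$ of size exactly $c'(G,M)$. The plan is to recast both $af(G,M)$ and $c'(G,M)$ as the two sides of a min-max theorem for directed cycles in a planar digraph, and then to invoke planar duality together with the Lucchesi--Younger theorem.

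Concretely, I would first fix a bipartition $V(G)=U\cup W$ and orient $G$: each edge of $M$ from its $U$-end to its $W$-end, each edge of $E(G)\setminus M$ from its $W$-end to its $U$-end, giving a plane digraph $D$. In $D$ the only out-arc at a vertex of $U$ is its $M$-arc and the only in-arc at a vertex of $W$ is its $M$-arc, so every directed cycle of $D$ alternates $M$/non-$M$ arcs; hence the directed cycles of $D$ are precisely the $M$-alternating cycles of $G$, and by Theorem~\ref{anti-forc-equi-defi} a set $S\subseteq E(G)\setminus M$ is an anti-forcing set of $M$ exactly when $D-S$ is acyclic. Next I would contract all arcs of $M$ to obtain a loopless plane digraph $D':=D/M$ with arc set $E(G)\setminus M$; the degree restrictions at the matched vertices make the directed cycles of $D'$ correspond bijectively to those of $D$ (a simple directed cycle stays simple under the contraction, and under the reverse operation of reinserting the relevant $M$-arcs). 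Under this correspondence the anti-forcing sets of $M$ become exactly the feedback arc sets of $D'$, while the compatible $M$-alternating sets---families whose cycles pairwise share only $M$-edges---become exactly the families of pairwise arc-disjoint directed cycles of $D'$. Thus $af(G,M)$ is the minimum size of a feedback arc set of $D'$ and $c'(G,M)$ is the maximum number of arc-disjoint directed cycles of $D'$.

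Finally I would prove these two numbers are equal for the planar digraph $D'$ by passing to the planar dual. Orienting each dual arc by a fixed $90^{\circ}$ rotation of the primal arc it crosses, the directed cycles of $D'$ become the minimal directed cuts of $(D')^{*}$ and the feedback arc sets of $D'$ become the dijoins of $(D')^{*}$; the Lucchesi--Younger theorem applied to $(D')^{*}$ then says that the minimum dijoin size equals the maximum number of arc-disjoint directed cuts, which translates back to: minimum feedback arc set of $D'$ equals maximum number of arc-disjoint directed cycles of $D'$. Hence $af(G,M)=c'(G,M)$. The routine care needed is in checking the cycle bijections of the second step and in fixing the rotational convention of the dual so that the orientations of cuts and cycles match up; the real obstacle, and the place where the planarity hypothesis is indispensable, is the Lucchesi--Younger step, since without passing to a planar dual the feedback-arc-set/cycle-packing min-max can genuinely fail. (Alternatively one could attempt a direct inductive construction of the anti-forcing set exploiting the distributive-lattice structure of $M$-alternating cycles in plane bipartite graphs, but the duality argument is cleaner and more transparent.)
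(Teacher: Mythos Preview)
The paper under review does not prove this theorem: it is quoted from \cite{Lei} without proof and used only as a tool, so there is no ``paper's own proof'' to compare your attempt against.

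That said, your argument is correct and is in fact the proof given in the source \cite{Lei}. The reduction you outline --- orient the $M$-edges one way and the non-$M$ edges the other so that directed cycles coincide with $M$-alternating cycles, contract the $M$-arcs to obtain a planar digraph $D'$ in which anti-forcing sets of $M$ become feedback arc sets and compatible $M$-alternating families become arc-disjoint directed-cycle packings, then pass to the planar dual and invoke the Lucchesi--Younger theorem to equate the minimum feedback arc set with the maximum cycle packing --- is exactly the machinery Lei, Yeh and Zhang employ. The points you flag as needing care (that contraction of $M$ gives a bijection on simple directed cycles, and that the dual orientation is chosen so primal directed cycles correspond to dual dicuts) are routine, and your identification of the Lucchesi--Younger step as the place where planarity is genuinely used is accurate.
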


    For $S\subseteq E(G)\setminus M$, an edge $e$ of $G-S$ is said to be \emph{forced} by $S$ if it belongs to all perfect matchings of $G-S$, thus belongs to $M$. An edge $g$ of $G-S$ is said to be \emph{anti-forced} by $S$ if it belongs to no perfect matchings of $G-S$, thus does not belong to $M$. Let $G\circleddash S$ denote the subgraph obtained from $G-S$ by deleting the ends of all the edges that forced by $S$ and deleting all the edges that anti-forced by $S$. Obviously we have the following lemma.

    \begin{lem}
        \label{anti-forc-pend-edge-judg}
        Let $G$ be a graph with a perfect matching $M.$ A subset $S\subseteq E(G)\setminus M$ is an anti-forcing set of $M$ if and only if $G\circleddash S$ is empty$.$
    \end{lem}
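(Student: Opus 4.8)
The plan is to translate the condition ``$G\circleddash S$ is empty'' into a statement purely about the perfect matchings of $G-S$, and then compare it with the definition of an anti-forcing set. The basic remark that makes everything work is that, since $S\subseteq E(G)\setminus M$, the matching $M$ is still a perfect matching of $G-S$; in particular $G-S$ always has at least one perfect matching, and any edge \emph{forced} by $S$ (i.e.\ lying in every perfect matching of $G-S$) must lie in $M$, so the set $F$ of forced edges satisfies $F\subseteq M$.

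Next I would observe that the only vertices removed in passing from $G-S$ to $G\circleddash S$ are the end-vertices of the edges in $F$ (deleting anti-forced edges removes no vertices), and that once all vertices are gone no edges survive. Hence $G\circleddash S$ is the empty graph if and only if $F$ covers $V(G)$, i.e.\ $F$ is a perfect matching of $G$; and since $F\subseteq M$ with both being perfect matchings, this happens exactly when $F=M$. So the lemma reduces to the equivalence: $S$ is an anti-forcing set of $M$ if and only if $M$ is precisely the set of edges forced by $S$.

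For the forward implication, if $S$ is an anti-forcing set then $M$ is the unique perfect matching of $G-S$, so every edge of $M$ lies in that unique perfect matching and is therefore forced, giving $F=M$ and hence $G\circleddash S$ empty. Conversely, if $G\circleddash S$ is empty then $F=M$ by the reduction above; now any perfect matching $M'$ of $G-S$ contains every forced edge, hence contains $F=M$, and being itself a perfect matching of $G-S$ it must equal $M$. Thus $M$ is the unique perfect matching of $G-S$ and $S$ is an anti-forcing set of $M$.

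There is no substantial obstacle here: the content is entirely in the bookkeeping of the operation $\circleddash$. The one point to handle with care is that it is the step ``delete the ends of the forced edges'' that removes vertices (while ``delete the anti-forced edges'' does not), together with the trivial but essential inclusion $F\subseteq M$, which forces $F=M$ as soon as $F$ turns out to be a perfect matching.
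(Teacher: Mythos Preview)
Your argument is correct. The paper does not actually prove this lemma; it simply introduces it with the phrase ``Obviously we have the following lemma,'' so there is no proof to compare against. Your write-up is exactly the natural unpacking of the definitions: since $S\subseteq E(G)\setminus M$, the set $F$ of forced edges is a subset of $M$, the operation $\circleddash$ removes precisely the vertices covered by $F$, and hence $G\circleddash S$ is empty iff $F=M$, which in turn is equivalent to $M$ being the unique perfect matching of $G-S$.
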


    \begin{lem}
        \label{anti-forc-subs-calc}
        Let $G$ be a graph with a perfect matching $M,$ and $\mathcal{C}$ be a compatible $M$-alternating set of $G.$ Given a subset $S\subseteq E(G)\setminus M,$ which consists of precisely one edge from each cycle in $\mathcal{C}.$ If $S$ anti-forces all the other edges in $E(\mathcal{C})\cap (E(G)\setminus M),$ then
        \begin{align*}
            af(G,M)=af(G\circleddash S,M\cap E(G\circleddash S))+|S|.
        \end{align*}
    \end{lem}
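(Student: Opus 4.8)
The plan is to show the inequality $af(G,M)\le af(G\circleddash S,M\cap E(G\circleddash S))+|S|$ directly by exhibiting an anti-forcing set, and then to use the lower bound via compatible alternating sets (Theorem 2.2 and Theorem 2.3 are not needed in full generality here, only the combinatorial characterization of Theorem 2.1 and the definitions) together with the hypothesis on $\mathcal{C}$ to get the reverse inequality. First I would set $G'=G\circleddash S$ and $M'=M\cap E(G')$, noting that $M'$ is indeed a perfect matching of $G'$: the edges of $M$ incident to the forced edges are removed when we delete the ends of forced edges, and no edge of $M$ is anti-forced, so $M'$ covers exactly $V(G')$. For the easy direction, take a minimum anti-forcing set $T$ of $M'$ in $G'$ with $|T|=af(G',M')$; I claim $S\cup T$ is an anti-forcing set of $M$ in $G$. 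Indeed, by Lemma 2.3 it suffices to check that $G\circleddash(S\cup T)$ is empty. Deleting $S$ first produces $G'$ (after removing forced/anti-forced edges), and then deleting $T$ from $G'$ and cleaning up yields $G'\circleddash T$, which is empty since $T$ is an anti-forcing set of $M'$; one must check that the clean-up operations commute appropriately, i.e. that $G\circleddash(S\cup T)=(G\circleddash S)\circleddash T$, which follows because forcing/anti-forcing is monotone under edge deletion. Hence $af(G,M)\le|S|+|T|=af(G',M')+|S|$.

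For the reverse inequality I would use the compatible alternating set lower bound. Since $\mathcal{C}$ is a compatible $M$-alternating set with $|\mathcal{C}|=|S|$ (one chosen edge per cycle, all chosen edges distinct because the cycles are compatible and the chosen edges lie in $E(G)\setminus M$ — here I should be careful: compatible cycles may share an $M$-edge but the $S$-edges are non-$M$ edges, and if two cycles shared a non-$M$ edge they would not be compatible, so the $|S|$ chosen edges are genuinely distinct, giving $|S|=|\mathcal{C}|$). Now take a minimum anti-forcing set $T'$ of $M'$ in $G'$. The key observation is that any $M'$-alternating cycle of $G'$ is also an $M$-alternating cycle of $G$ that is compatible with every cycle in $\mathcal{C}$: it lives in $G'=G\circleddash S$, so it avoids all edges of $S$ and all edges anti-forced by $S$; since the hypothesis says $S$ anti-forces every non-$M$ edge lying on the cycles of $\mathcal{C}$, any cycle in $G'$ can meet a cycle of $\mathcal{C}$ only in $M$-edges, hence is compatible with it. Therefore, picking one edge of $T'$ from each $M'$-alternating cycle — more precisely, a minimal anti-forcing set corresponds to a compatible alternating set of the same size by planarity/bipartiteness when $G$ is such — we can assemble a compatible $M$-alternating set of $G$ of size $|\mathcal{C}|+c'(G',M')=|S|+af(G',M')$, whence $af(G,M)=c'(G,M)\ge|S|+af(G',M')$.

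The main obstacle, and the step I would spend the most care on, is the clean bookkeeping of the two $\circleddash$ operations: verifying $G\circleddash(S\cup T)=(G\circleddash S)\circleddash T$ as graphs (so that "T anti-forces M' in G'" transfers back to "$S\cup T$ anti-forces M in G"), and symmetrically that an anti-forcing set of $M$ in $G$ that contains $S$ restricts to an anti-forcing set of $M'$ in $G'$. Both rest on the fact that an edge forced (resp. anti-forced) by a set $S_0$ remains forced (resp. anti-forced) by any superset, together with the fact that deleting endpoints of forced edges and deleting anti-forced edges are operations that can be performed in any order. A secondary delicate point is the passage from the lower bound $c'$ back to $af$: strictly speaking I would invoke that $G$ need only be assumed to have the property $af(G,M)=c'(G,M)$, or restrict attention to the planar bipartite case as in Theorem 2.3, and argue that a minimum anti-forcing set $T'$ of $M'$ can be chosen so that its image is a compatible $M'$-alternating set via the same theorem applied to $G'$ (which is again planar bipartite), then glue with $\mathcal{C}$. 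Once these two technical points are pinned down, combining the two inequalities gives the stated equality.
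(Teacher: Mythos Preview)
Your upper bound argument matches the paper's: take a minimum anti-forcing set $S'$ of $M'$ in $G'=G\circleddash S$, observe $G\circleddash(S\cup S')=(G\circleddash S)\circleddash S'$ is empty, and invoke Lemma~2.3. That step is fine.

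The gap is in the lower bound. Your route---glue $\mathcal{C}$ to a maximum compatible $M'$-alternating set of $G'$ to obtain a compatible $M$-alternating set in $G$ of size $|\mathcal{C}|+c'(G',M')$---only yields $af(G,M)\ge |S|+c'(G',M')$. To turn $c'(G',M')$ into $af(G',M')$ you must assume $af=c'$ for $G'$, i.e.\ essentially restrict to planar bipartite graphs (Theorem~2.3). You noticed this yourself, but the lemma is stated for \emph{arbitrary} graphs, so as written your argument does not prove it. The paper avoids this entirely by a direct pigeonhole on an arbitrary anti-forcing set $S_0$ of $M$: partition $S_0$ into $S_0\cap E(G')$ and $S_0\cap(E(G)\setminus E(G'))$. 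Since the non-$M$ edges of every cycle in $\mathcal{C}$ lie in $E(G)\setminus E(G')$ (they are in $S$ or anti-forced by $S$) and compatible cycles are edge-disjoint on non-$M$ edges, the second part needs at least $|\mathcal{C}|=|S|$ edges; and any $M'$-alternating cycle in $G'$ is an $M$-alternating cycle in $G$ whose intersection with $S_0$ lies in $E(G')$, so the first part must be an anti-forcing set of $M'$ in $G'$ and hence has size at least $af(G',M')$. This gives $|S_0|\ge|S|+af(G',M')$ with no structural hypothesis on $G$. Your approach would suffice for all applications in the paper (hexagonal systems are planar bipartite), but to match the stated generality you should replace the compatible-set lower bound by this direct counting argument.
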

    \begin{proof}
        Let $S'$ be a minimum anti-forcing set of $M\cap E(G\circleddash S)$ in $G\circleddash S$. Then $af(G\circleddash S,M\cap E(G\circleddash S))=|S'|$. Since $G\circleddash (S\cup S')=(G\circleddash S)\circleddash S'$ is empty, $S\cup S'$ is an anti-forcing set of $M$ in $G$ by Lemma \ref{anti-forc-pend-edge-judg}. Suppose $S_0$ is another anti-forcing set of $M$ in $G$ such that $|S_0|<|S\cup S'|$. Then either $|S_0\cap E(G\circleddash S)|<|S'|$ or $|S_0\cap (E(G)\setminus E(G\circleddash S))|<|S|=|\mathcal{C}|$. By assumption, we have $E(\mathcal{C})\cap (E(G)\setminus M)\subseteq E(G)\setminus E(G\circleddash S)$. It follows that either $S_0\cap E(G\circleddash S)$ is not an anti-forcing set of $M\cap E(G\circleddash S)$ in $G\circleddash S$, or there exists an $M$-alternating cycle in $\mathcal{C}$ containing no edges of $S_0\cap (E(G)\setminus E(G\circleddash S))$. This implies that there is an $M$-alternating cycle in $G$ containing no edges of $S_0$, a contradiction to Theorem \ref{anti-forc-equi-defi}. Hence $S\cup S'$ is a minimum anti-forcing set of $M$ in $G$, which implies $af(G,M)=|S|+|S'|$.
    \end{proof}

    By the above lemma, from a special compatible $M$-alternating set we can find a subset that is contained in some minimum anti-forcing set of a perfect matching $M$. In particular, by Theorem \ref{anti-forc-mini-max} for hexagonal systems we can find a minimum anti-forcing set directly.

\section{Catacondensed hexagonal system}
    A hexagonal system $H$ is called \emph{catacondensed} if no three of its hexagons share a common vertex. If $H$ contains at least two hexagons, then every hexagon of $H$ has one, two, or three neighboring hexagons. A hexagon is called \emph{terminal} and \emph{branched} if it has one and three neighboring hexagons respectively. A hexagon with precisely two neighboring hexagons is called a \emph{kink} if it possesses two adjacent vertices with degree two, and is called \emph{linear} otherwise. If $H$ contains precisely one hexagon, then we call it a \emph{terminal hexagon}. In Fig. \ref{cata-eg}(a), we illustrate an example of catacondensed hexagonal system, where $T$ denotes terminal hexagon, $B$ denotes branched hexagon, $K$ denotes kink, and $L$ denotes linear hexagon.

    \begin{figure}[htbp]
        \centering
        \includegraphics[height=1.8in]{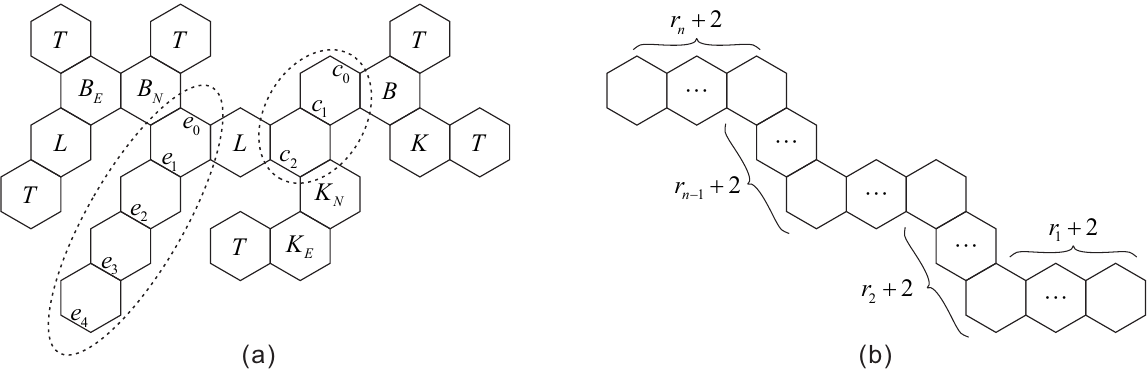}
        \caption{(a) A catacondensed hexagonal system and (b) a hexagonal chain.}
        \label{cata-eg}
    \end{figure}

    A catacondensed hexagonal system with no branched hexagons is called a \emph{hexagonal chain} (see Fig. \ref{cata-eg}(b)). A hexagonal chain with no kinks is called \emph{linear}, and furthermore a linear hexagonal chain $W$ in $H$ is called \emph{maximal} if it is contained in no other linear ones of $H$. Define the \emph{length} of $W$ as the number of hexagons in $W$, and a \emph{vertical edge} of $W$ as an edge that intersects with a straight line passing through all the centers of hexagons in $W$. In Fig. \ref{cata-eg}(a), we illustrate two maximal linear hexagonal chains with length four and two respectively, where $e_i$ is a vertical edge of the first one and $c_j$ is a vertical edge of the second one for $i=0,1,2,3,4$ and $j=0,1,2$.

    \begin{lem}\em\cite{deng2}
        \label{ding}
        Let $H$ be a catacondensed hexagonal system with a perfect matching $M.$ Then every maximal linear hexagonal chain of $H$ has precisely one vertical edge that belongs to $M.$
    \end{lem}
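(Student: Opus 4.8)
The plan is to show that the $n+1$ vertical edges $e_0,e_1,\dots,e_n$ of a maximal linear chain $W$ (with hexagons $s_1,\dots,s_n$ in order, $e_{i-1}$ and $e_i$ being the two opposite vertical edges of $s_i$) constitute an \emph{elementary cut} of $H$, and then to prove that any elementary cut meets $M$ in exactly one edge. First I would fix notation, writing $e_i=a_ib_i$, so that $a_{i-1}u_ia_i$ is the upper path and $b_{i-1}v_ib_i$ the lower path of $s_i$. The first substantive step uses maximality: if a hexagon of $H$ were glued to $s_1$ along $e_0$, then since $e_0$ and $e_1$ are opposite edges of $s_1$ it would extend $W$ to a longer linear chain, so $e_0$ (and likewise $e_n$) lies on the perimeter of $H$. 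Moreover, since $H$ is catacondensed, every edge of an interior hexagon $s_i$ $(2\le i\le n-1)$ is incident with $e_{i-1}$ or $e_i$, whose endpoints already belong to two hexagons, so $s_i$ has no neighbour outside $W$; and for the same reason only the edges $a_0u_1,b_0v_1$ of $s_1$ and $u_na_n,v_nb_n$ of $s_n$ can carry a neighbour outside $W$. It follows that the straight line through the centres of $s_1,\dots,s_n$ enters $H$ through $e_0$, leaves each $s_i$ by the edge opposite the one it entered, and exits through $e_n$, so $C=\{e_0,\dots,e_n\}$ is a genuine elementary cut, splitting $H$ into an upper part $H_1$ (containing the paths $a_{i-1}u_ia_i$ and all hexagons attached above the cut) and a lower part $H_2$.

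Next I would run the bipartite parity count. Two-colour $V(H)$; since $a_{i-1}$ and $a_i$ are joined by the $2$-path $a_{i-1}u_ia_i$, all the $a_i$ receive one colour, say black, and then all the $b_i$ are white, so each cut edge has its $H_1$-endpoint black. Hence no white vertex of $H_1$ lies on a cut edge, so $M\cap E(H_1)$ covers every white vertex of $H_1$ and every black vertex except those $a_i$ with $e_i\in M$; being a matching, this forces $|M\cap C|=|B(H_1)|-|W(H_1)|$, a quantity independent of the choice of $M$. To finish I would evaluate this difference: $H_1$ is assembled from the path $P=a_0u_1a_1u_2\cdots u_na_n$ by attaching, one hexagon at a time, the hexagons of $H$ lying above the cut (which, by the previous step, hang off $s_1$ and $s_n$ only, and whose inner dual is a forest). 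Catacondensedness guarantees that the edges a hexagon shares with other hexagons are pairwise non-adjacent, so, adding the hexagons in an order compatible with the forest structure, each new hexagon meets the already-built part in $k\in\{0,1,2,3\}$ edges carrying $k$ black and $k$ white vertices, and contributes $3-k$ new black and $3-k$ new white vertices. Thus $|B(H_1)|-|W(H_1)|$ equals the corresponding difference for $P$, namely $(n+1)-n=1$. Combining with the previous paragraph, $|M\cap C|=1$, which is exactly the assertion.

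I expect the main obstacle to be the last two points: verifying that the colour difference is exactly $1$ (not merely odd), and the careful bookkeeping at the two terminal hexagons $s_1,s_n$ of $W$, whose apex vertices $u_1,v_1,u_n,v_n$ may have degree three in $H$ precisely because $W$ is maximal. This is also why a purely local approach — propagating forced single/double bonds along the degree-two apex vertices of the interior hexagons of $W$ — does not by itself close the proof: it pins down everything strictly inside $W$ but leaves open whether both end hexagons could carry a vertical edge of $M$, and only the global cut-and-colour count rules this out. The degenerate case $n=1$, in which $W$ is maximal only if $H$ is a single hexagon, is immediate, since a $6$-cycle contains exactly one of each pair of opposite edges in every perfect matching.
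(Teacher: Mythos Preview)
The paper does not supply a proof of this lemma; it is quoted from Deng and Zhang~\cite{deng2} without argument, so there is no in-paper proof to compare against.

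Your proof is correct and is essentially the classical elementary-cut argument for hexagonal systems: the vertical edges of a maximal linear chain form an edge cut, and a bipartite count on one side shows that every perfect matching meets the cut in exactly $|B(H_1)|-|W(H_1)|=1$ edges. Two small points can be sharpened. First, since the inner dual of a catacondensed system is a tree, when you build $H_1$ by adjoining the hexagons above the cut in an order compatible with that tree, each new hexagon meets the already-built part in exactly one edge, so $k=1$ throughout; your allowance for $k\in\{0,1,2,3\}$ is harmless but not needed. Second, your remark that the degenerate case $n=1$ forces $H$ to be a single hexagon is indeed correct: if $H$ has at least two hexagons, then any hexagon together with a neighbour already forms a linear chain of length two, so no single hexagon can be maximal. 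With these observations your argument goes through cleanly.
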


    Before discussing anti-forcing polynomial of catacondensed hexagonal systems $H$, we give some notations first. A kink of $H$ is called an \emph{end kink} if $H$ contains a maximal linear hexagonal chain starting from the kink and ending at a terminal hexagon. In Fig. \ref{cata-eg}(a) we illustrate an end kink as $K_E$, and a kink but not an end one as $K_N$. A branched hexagon of $H$ is called an \emph{end branched hexagon} if $H$ contains two maximal linear hexagonal chains starting from the branched hexagon and ending at terminal hexagons. In Fig. \ref{cata-eg}(a) we illustrate an end branched hexagon as $B_E$, and a branched hexagon but not an end one as $B_N$. Except for linear hexagonal chains, every catacondensed hexagonal system contains a kink or a branched hexagon, thus contains an end kink or an end branched hexagon.

    Define a \emph{tail} $T(t_1,t_2,t_3)$ with respect to $h$ as a subgraph $T_1\cup T_2\cup T_3$ of $H$, where $h$ is a terminal hexagon of $H$ if $H$ is a linear hexagonal chain, and an end kink or an end branched hexagon of $H$ otherwise. In detail, if $H$ is a linear hexagonal chain, then define $T_1=H$ and $T_2=T_3=h$, and define $t_1+2$ ($t_1\geqslant -1$) as the length of $T_1$ and $t_2=t_3=-1$ (see Fig. \ref{cata-defi}(a)). If $h$ is an end kink, then define $T_2=h$, and $T_1,T_3$ as maximal linear hexagonal chains starting from $h$ satisfying that $T_1$ ends at a terminal hexagon, and define $t_2=-1$, $t_1+2,t_3+2$ ($t_1,t_3\geqslant 0$) as the length of $T_1,T_3$ respectively (see Fig. \ref{cata-defi}(b)). If $h$ is an end branched hexagon, then define $T_1,T_2,T_3$ as maximal linear hexagonal chains starting from $h$ satisfying that $T_1,T_2$ end at terminal hexagons, and define $t_1+2,t_2+2,t_3+2$ ($t_1,t_2,t_3\geqslant 0$) as the length of $T_1,T_2,T_3$ respectively (see Fig. \ref{cata-defi}(c)).

    \begin{figure}[htbp]
        \centering
        \includegraphics[height=4.2in]{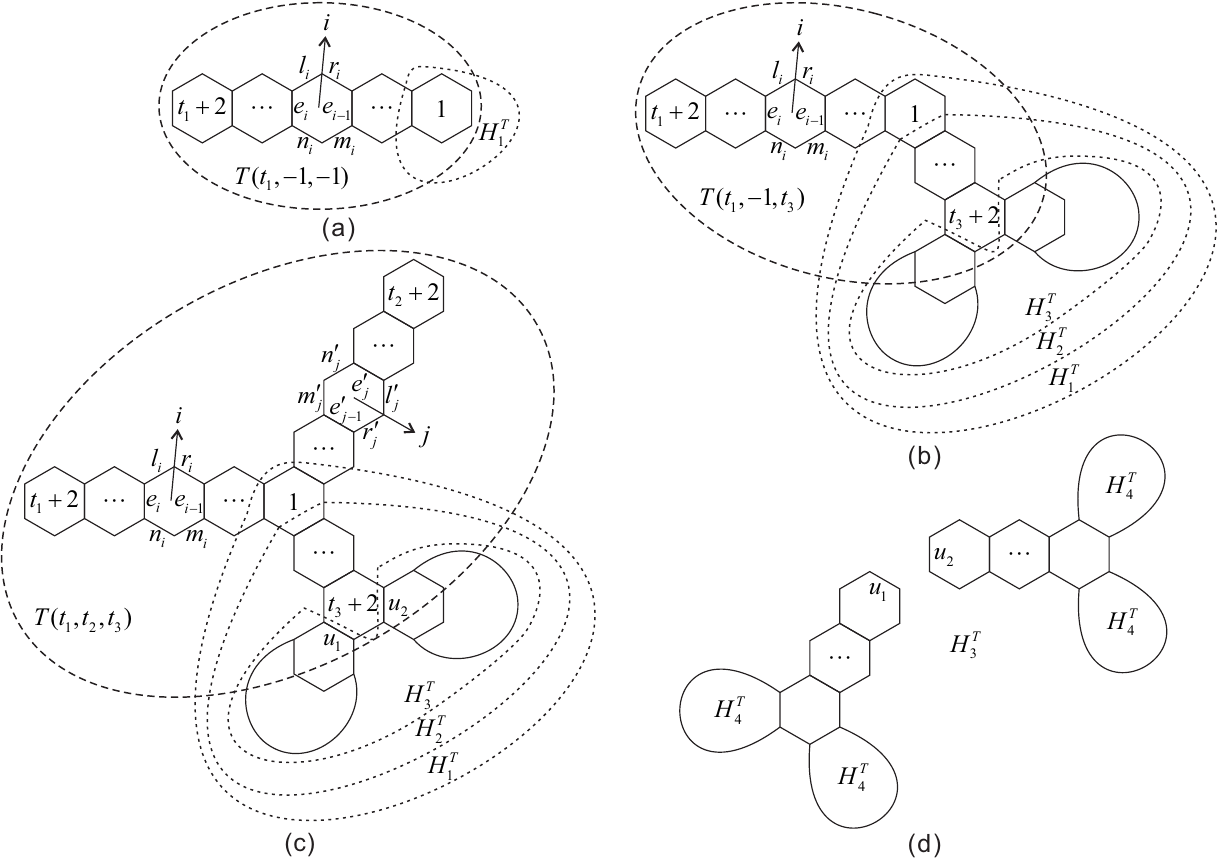}
        \caption{Illustration of tail $T(t_1,t_2,t_3)$ and $H^T_i$ for $i=1,2,3,4$.}
        \label{cata-defi}
    \end{figure}

    For a tail $T(t_1,t_2,t_3)$ with respect to $h$, denote $H^T_1$ the subgraph of $H$ by deleting all the hexagons in $T_1\cup T_2$ except for $h$, denote $H^T_2$ the subgraph of $H$ by deleting all the hexagons in $T_1\cup T_2$, and denote $H^T_3$ the subgraph of $H$ by deleting all the hexagons in $T(t_1,t_2,t_3)$ (see Figs. \ref{cata-defi}(a-c)). Note that $H^T_1$ is a \emph{smaller} catacondensed hexagonal system than $H$ (namely containing the number of hexagons less than that of $H$) if $t_1\geqslant 0$. $H^T_2$ is an empty graph if $t_3=-1$ and is a smaller catacondensed hexagonal system than $H$ otherwise. $H^T_3$ is an empty graph if $t_3=-1$ or $T_3$ ends at a terminal hexagon, is a smaller catacondensed hexagonal system than $H$ if $t_3\geqslant 0$ and $T_3$ ends at a kink, and is a union of two smaller catacondensed hexagonal systems than $H$ if $t_3\geqslant 0$ and $T_3$ ends at a branched hexagon. If $h$ is an end branched hexagon, then denote $u_1,u_2$ the two edges that adjacent to the last vertical edge of $T_3$. Denote $H^T_4$ the subgraph of $H^T_3$ by deleting all the hexagons in the maximal linear hexagonal chains starting from the hexagon containing $u_1$ and the hexagon containing $u_2$ respectively (see Fig. \ref{cata-defi}(d)). Note that $H^T_4$ may be an empty graph, or a union of one, two, three, or four smaller catacondensed hexagonal systems than $H$.

    \begin{thm}
        \label{tail-anti-forc}
        The anti-forcing polynomial of a catacondensed hexagonal system $H$ possessing a tail $T(t_1,t_2,t_3)$ with respect to $h$ has the following form$:$

        \noindent$(1)$ if $t_2=-1,$ then
        \begin{align}
            \label{rela-tail2}
            Af(H,x)=&xAf(H^T_2,x)+xAf(H^T_3,x)+\alpha_T;
        \end{align}
        $(2)$ if $t_2\geqslant 0,$ then
        \begin{align}
            \label{rela-tail3}
            Af(H,x)=x^2Af(H^T_1,x)+(3x^2+2(t_1+t_2)x^3+t_1t_2x^4)Af(H^T_2,x)-x^3Af(H^T_3,x)+\beta_T,
        \end{align}
        where $\alpha_T=0$ if $t_1=-1,$ and $\alpha_T=xAf(H^T_1,x)+t_1x^2Af(H^T_2,x)-x^2Af(H^T_3,x)$ if $t_1\geqslant 0;$ and $\beta_T=xAf(H^T_2,x)-x^2Af(H^T_4,x)$ if $t_3=0,$ and $\beta_T=x^2Af(H^T_3,x)$ if $t_3\geqslant 1.$
    \end{thm}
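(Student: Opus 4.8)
The proof is driven by Lemma~\ref{ding}: for a perfect matching $M$ and a maximal linear hexagonal chain of length $\ell$, exactly one of its $\ell+1$ vertical edges lies in $M$, and (by an elementary propagation) fixing that edge forces the matching on every hexagon of the chain, hence determines the status of the two edges joining the chain's end hexagons to their neighbours. The plan is to partition $\mathcal{M}(H)$ accordingly. In case $(1)$, where $h$ is a terminal hexagon of a linear chain or an end kink (so $T_2=h$), I would classify $M$ by which vertical edge of $T_1$ lies in $M$, obtaining classes $\mathcal{M}_0,\dots,\mathcal{M}_{t_1+2}$ (indexing $\mathcal{M}_j$ by the $j$-th vertical edge of $T_1$ counted from its terminal hexagon towards $h$). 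In case $(2)$, where $h$ is an end branched hexagon, I would classify $M$ by the pair of vertical edges of $T_1$ and of $T_2$ lying in $M$.

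For a fixed class I would run the forcing analysis to see which hexagons of the tail become forced, extract from it a compatible $M$-alternating set $\mathcal{C}_M$ (the terminal-hexagon-type alternating hexagons in the forced part, together with, in the boundary classes, one further $M$-alternating cycle at $h$ such as a short perimeter), and choose a transversal $S_M\subseteq E(H)\setminus M$ consisting of one non-$M$ edge of each member of $\mathcal{C}_M$, picked so that $S_M$ anti-forces every other non-$M$ edge of $\bigcup\mathcal{C}_M$. Lemma~\ref{anti-forc-subs-calc} then gives
\begin{align*}
af(H,M)=af\bigl(H\circleddash S_M,\,M\cap E(H\circleddash S_M)\bigr)+|S_M|,
\end{align*}
and one checks, class by class, that $H\circleddash S_M$ is isomorphic to one of $H^T_1,H^T_2,H^T_3$ in case $(1)$ (resp.\ one of $H^T_1,\dots,H^T_4$ in case $(2)$), with $|S_M|\in\{1,2\}$ (resp.\ $|S_M|\in\{1,2,3,4\}$). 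The degenerate possibilities singled out in the statement --- $t_1=-1$; $t_3=-1$; $T_3$ ending at a terminal hexagon, a kink or a branched hexagon; $t_3=0$ versus $t_3\ge1$ --- are exactly the cases in which a reduced graph is empty or a disjoint union of smaller catacondensed systems, and are handled by the same reduction together with the multiplicativity of $Af(\cdot,x)$ over components. Summing $x^{af(H,M)}$ over $M$ grouped by class, and using that $M\mapsto M\cap E(H\circleddash S_M)$ is a bijection of a class onto $\mathcal{M}(H\circleddash S_M)$ in all but one or two distinguished classes, produces the main terms $xAf(H^T_1,x)$, $(x+t_1x^2)Af(H^T_2,x)$, $xAf(H^T_3,x)$ of \eqref{rela-tail2} (resp.\ the terms $x^2Af(H^T_1,x)$, $(3x^2+2(t_1+t_2)x^3+t_1t_2x^4)Af(H^T_2,x)$ and the $\beta_T$-terms of \eqref{rela-tail3}); the coefficient $3x^2+2(t_1+t_2)x^3+t_1t_2x^4$ emerges by summing over the $t_1$ and $t_2$ ``interior'' vertical edges of $T_1$ and $T_2$.

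In the distinguished class that reduces to $H^T_1$, the map $M\mapsto M\cap E(H\circleddash S_M)$ is injective but not onto: the missing matchings of $H^T_1$ are those in which the arm $T_3$ of $H^T_1$ is forced, they biject with $\mathcal{M}(H^T_3)$, and their anti-forcing numbers exceed those of the corresponding matchings of $H^T_3$ by a fixed constant; this yields the negative correction $-x^2Af(H^T_3,x)$ in \eqref{rela-tail2} (resp.\ $-x^3Af(H^T_3,x)$ in \eqref{rela-tail3}), and an analogous non-surjectivity in the $\beta_T$-class when $t_3=0$ yields $-x^2Af(H^T_4,x)$. I expect the forcing bookkeeping and this inclusion--exclusion to be the main obstacle: one must track the propagation of forced and anti-forced edges through every hexagon of the tail in each class --- so as to verify the hypotheses of Lemma~\ref{anti-forc-subs-calc}, and, via Theorem~\ref{anti-forc-mini-max}, that the compatible $M$-alternating set chosen is of maximum size --- and the interaction of the three vertical edges of $T_1,T_2,T_3$ at the branched hexagon $h$ in case $(2)$ is especially delicate, since there the edges $u_1,u_2$ adjacent to the last vertical edge of $T_3$ govern how the reduced graph fragments and $t_3=0$ behaves differently from $t_3\ge1$. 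Once the multiplicities are pinned down, the final algebraic assembly is routine.
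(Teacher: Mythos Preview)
Your overall plan---partition $\mathcal{M}(H)$ by the vertical edges of $T_1$ (and $T_2$), reduce each class via Lemma~\ref{anti-forc-subs-calc}, and handle the non-surjectivity at $H^T_1$ by subtracting off the $H^T_3$-piece---matches the paper's proof for all of case~(1) and for the classes $\mathcal{M}_{i,j}$ with $i,j\geqslant 1$ in case~(2). The coefficients you predict, including $3x^2+2(t_1+t_2)x^3+t_1t_2x^4$, are exactly what the paper obtains.

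The gap is the class $\mathcal{M}_{0,0}$ in case~(2), which produces the $\beta_T$ term. Here Lemma~\ref{anti-forc-subs-calc} does \emph{not} apply in the way you describe. When $e_0,e'_0\in M$, the only $M$-alternating hexagon contained in $T_1\cup T_2$ is $h$ itself, and removing any single non-$M$ edge of $h$ (these are $e_1,e'_1,m_1$) fails to anti-force the other two: for instance $H-m_1$ still has perfect matchings containing $e_1$. So the hypothesis ``$S$ anti-forces all the other edges in $E(\mathcal C)\cap(E(G)\setminus M)$'' is violated, and there is no choice of $S_M$ with $|S_M|=1$ for which $H\circleddash S_M$ is one of the $H^T_k$. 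More fundamentally, the correct target matching is not $M\cap E(H^T_2)$ at all: one needs $g(M)=(M\cap E(H^T_2))\cup\{m_1\}$, which adds the edge $m_1\notin M$, and Lemma~\ref{anti-forc-subs-calc} can never output a matching that is not a restriction of $M$.

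What the paper does instead for $\mathcal{M}_{0,0}$ is set up the bijection $g:\mathcal{M}_{0,0}(H)\to\{M'\in\mathcal{M}(H^T_2):m_1\in M'\}$ (contracting the long $M$-alternating boundary path around $T_1\cup T_2$ to the single edge $m_1$) and then prove $af(H,M)=af(H^T_2,g(M))+1$ by a direct two-sided argument: for the upper bound, if $S$ is a minimum anti-forcing set of $g(M)$ in $H^T_2$ then $S\cup\{m_1\}$ is an anti-forcing set of $M$ in $H$; for the lower bound (via Theorem~\ref{anti-forc-mini-max}), a maximum compatible $g(M)$-alternating set $\mathcal C$ of $H^T_2$ is promoted to a compatible $M$-alternating set of $H$ of size $|\mathcal C|+1$ by adjoining $h$ and, if some $Y\in\mathcal C$ passed through $m_1$, rerouting $Y$ through the perimeter of $h\cup B_2\cup D_2$. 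Only after this does the paper split on $t_3=0$ versus $t_3\geqslant 1$ and apply the Lemma~\ref{anti-forc-subs-calc}-style reduction inside $H^T_2$ to extract $Af^{\star}(H^T_2,x)$. You should add this bijection-plus-sandwich step; the rest of your outline is sound.
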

    \begin{proof}
        \textbf{(1)} If $H$ contains a tail $T(t_1,-1,t_3)$ ($t_1,t_3\geqslant -1$, and if $t_1=-1$ then $t_3=-1$), then we in turn denote the hexagon in $T_1$ by $C_i$ for $i=1,2,\ldots,t_1+2$ such that $C_{1}=h$. Furthermore, denote the vertical edge that $C_{i}$ and $C_{i+1}$ share in common by $e_i$, and along the clockwise direction denote the rest edges of $C_{i}$ by $l_{i},r_i,e_{i-1},m_i,n_i$ for $i=1,2,\ldots,t_1+2$ (see Figs. \ref{cata-defi}(a,b)). By Lemma \ref{ding} we can divide $\mathcal{M}(H)$ in $t_1+3$ subsets:
        \begin{align*}
            \mathcal{M}_i(H)=\{M\in \mathcal{M}(H): e_i\in M\}
        \end{align*}
        for $i=0,1,\ldots,t_1+2$. By Eq. (\ref{equ1}), we have
        \begin{align}
            \label{tail2}
            Af(H,x)=&\sum_{i=0}^{t_1+2}\sum_{ M\in {\mathcal{M}_i(H)}}{ x^{ af(H,M) } }:=\sum_{i=0}^{t_1+2}Af_i(H,x).
        \end{align}

        Given $M\in \mathcal{M}_{0}(H)$. Then $r_1\notin M$. On the one hand $r_{1}$ belongs to $M$-alternating hexagon $C_{1}$. And on the other hand $r_{1}$ forces edges $l_{1},n_{1},e_0$ and anti-forces edges $e_{1},m_{1}$. By Lemma \ref{anti-forc-subs-calc} we know
        \begin{align*}
           af(H,M)=af(H\circleddash r_{1},M\cap E(H\circleddash r_{1}))+1.
        \end{align*}
        In Fig. \ref{tail2-proof}(a), we illustrate the three cases of $H\circleddash r_{1}$, where double line denotes $e_{0}$, cross denotes $r_1$, and bold lines denote the edges that forced by $r_1$. Note that in the last case, the last vertical edge $v$ of $T_3$ is a cut edge whose removal from the subgraph $H-r_1$ makes the two components to be even, which implies that $v$ is anti-forced by $r_1$. Hence we know $H\circleddash r_{1}=H^T_3$. It follows that
        \begin{align}
            \label{tail24}
            Af_{0}(H,x)=&\sum _{ M\in {\mathcal{M}_{0}(H)}}{ x^{ af(H^T_3,M\cap E(H^T_3))+1}}=\sum _{ M\in {\mathcal{M}(H^T_3)}}{x^{af(H^T_3,M)}}\cdot x=xAf(H^T_3,x).
        \end{align}

        \begin{figure}[htbp]
            \centering
            \includegraphics[height=2.8in]{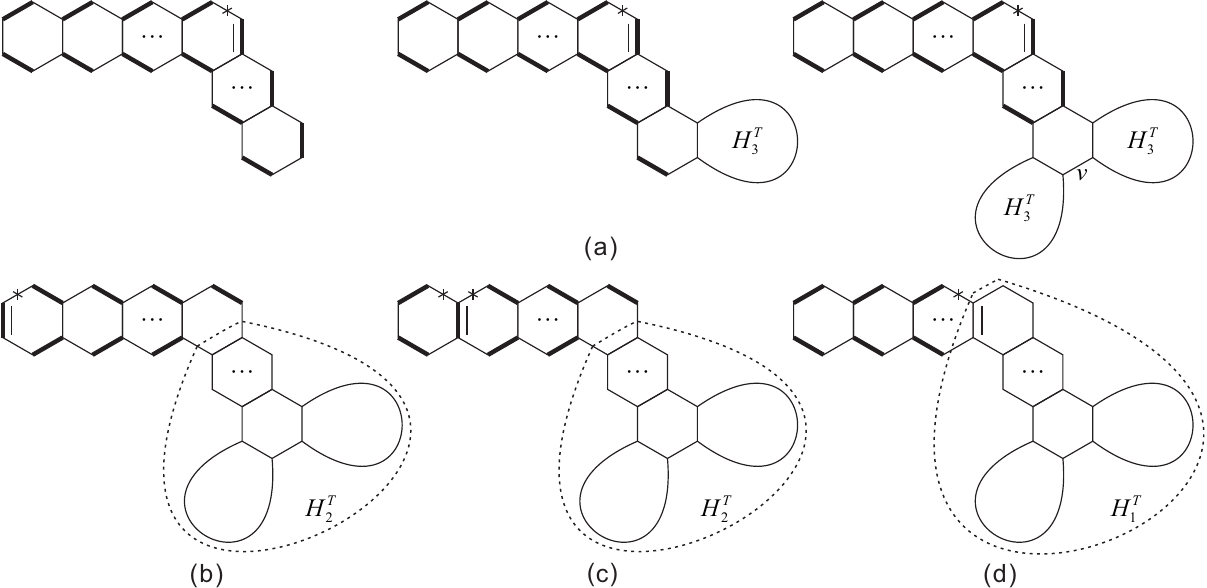}
            \caption{(a) $H\circleddash r_{1}$, (b) $H\circleddash l_{t_1+2}$, (c) $H\circleddash \{r_{i+1},l_i\}$, and (d) $H\circleddash r_{2}$.}
            \label{tail2-proof}
        \end{figure}

        Given $M\in \mathcal{M}_{t_1+2}(H)$. Then $l_{t_1+2}\notin M$. On the one hand $l_{t_1+2}$ belongs to $M$-alternating hexagon $C_{t_1+2}$. And on the other hand $l_{t_1+2}$ forces edges $e_{t_1+2},r_{t_1+2},m_{t_1+2}$ and anti-forces edges $e_{t_1+1},n_{t_1+2}$. By Lemma \ref{anti-forc-subs-calc} and Fig. \ref{tail2-proof}(b) we know
        \begin{align*}
           af(H,M)=af(H\circleddash l_{t_1+2},M\cap E(H\circleddash l_{t_1+2}))+1=af(H^T_2,M\cap E(H^T_2))+1.
        \end{align*}
        It follows that
        \begin{align}
            \label{tail21}
            Af_{t_1+2}(H,x)=&\sum _{ M\in {\mathcal{M}_{t_1+2}(H)}}{ x^{ af(H^T_2,M\cap E(H^T_2))+1}}=\sum _{ M\in {\mathcal{M}(H^T_2)}}{x^{af(H^T_2,M)}}\cdot x=xAf(H^T_2,x).
        \end{align}
        Substituting Eqs. (\ref{tail24},\ref{tail21}) into Eq. (\ref{tail2}), we immediately obtain Eq. (\ref{rela-tail2}) for $t_1=-1$.

        For $t_1\geqslant 1$, given $M\in \mathcal{M}_i(H)$ for $i=2,3\ldots,t_1+1$. Then $r_{i+1},l_{i}\notin M$. On the one hand $r_{i+1}$ and $l_{i}$ belong to compatible $M$-alternating hexagons $C_{i+1}$ and $C_{i}$ respectively. And on the other hand $\{r_{i+1},l_i\}$ forces edges $l_{i+1},n_{i+1},e_i,r_i,m_i$ and anti-forces edges $e_{i+1},m_{i+1},n_i,e_{i-1}$. By Lemma \ref{anti-forc-subs-calc} and Fig. \ref{tail2-proof}(c) we know
        \begin{align*}
           af(H,M)=af(H\circleddash \{r_{i+1},l_i\},M\cap E(H\circleddash \{r_{i+1},l_i\}))+2=af(H^T_2,M\cap E(H^T_2))+2.
        \end{align*}
        It follows that
        \begin{align}
            \label{tail22}
            Af_i(H,x)=&\sum _{ M\in {\mathcal{M}_i(H)}}{ x^{ af(H^T_2,M\cap E(H^T_2))+2}}=\sum _{ M\in {\mathcal{M}(H^T_2)}}{x^{af(H^T_2,M)}}\cdot x^2=x^2Af(H^T_2,x).
        \end{align}

        For $t_1\geqslant 0$, given $M\in \mathcal{M}_{1}(H)$. Then $r_{2}\notin M$. On the one hand $r_{2}$ belongs to $M$-alternating hexagon $C_{2}$. And on the other hand $r_{2}$ forces edges $l_{2},n_{2}$ and anti-forces edges $e_{2},m_{2}$. By Lemma \ref{anti-forc-subs-calc} and Fig. \ref{tail2-proof}(d) we know
        \begin{align*}
           af(H,M)=af(H\circleddash r_{2},M\cap E(H\circleddash r_{2}))+1=af(H^T_1,M\cap E(H^T_1))+1.
        \end{align*}
        It follows that
        \begin{align*}
            Af_{1}(H,x)=&\sum _{ M\in {\mathcal{M}_{1}(H)}}{ x^{ af(H^T_1,M\cap E(H^T_1))+1}}=\sum _{M\in {\mathcal{M}(H^T_1)},~e_{1}\in M}{x^{af(H^T_1,M)}}\cdot x \\
            :=&xAf^\ast(H^T_1,x)=\sum _{M\in {\mathcal{M}(H^T_1)}}{x^{af(H^T_1,M)}}\cdot x-\sum _{M\in {\mathcal{M}(H^T_1)},~e_1\notin M}{x^{af(H^T_1,M)}}\cdot x.
        \end{align*}
        In $H^T_1$, given $M\in {\mathcal{M}(H^T_1)}$ with $e_1\notin M$. Then $e_0\in M$. By a similar argument to the calculation of $Af_0(H,x)$ we know
        \begin{align*}
           af(H^T_1,M)=af(H^T_1 \circleddash r_{1},M\cap E(H^T_1\circleddash r_{1}))+1=af(H^T_3,M\cap E(H^T_3))+1.
        \end{align*}
        It follows that
        \begin{align}
            \label{tail23}
            Af_{1}(H,x)=&xAf(H^T_1,x)-\sum _{M\in {\mathcal{M}(H^T_1)},~e_1\notin M}{x^{af(H^T_3,M\cap E(H^T_3))+1}}\cdot x \nonumber\\
            =&xAf(H^T_1,x)-\sum _{M\in {\mathcal{M}(H^T_3)}}{x^{af(H^T_3,M)}}\cdot x^2 =xAf(H^T_1,x)-x^2Af(H^T_3,x).
        \end{align}
        Substituting Eqs. (\ref{tail24}-\ref{tail23}) into Eq. (\ref{tail2}), we immediately obtain Eq. (\ref{rela-tail2}) for $t_1\geqslant 0$.

        \textbf{(2)} If $H$ contains a tail $T(t_1,t_2,t_3)$ ($t_1,t_2,t_3\geqslant 0$), then we in turn denote the hexagon in $T_1$ by $B_i$ for $i=1,2,\ldots,t_1+2$ and the hexagon in $T_2$ by $D_j$ for $j=1,2,\ldots,t_2+2$ such that $B_{1}=D_{1}=h$. Furthermore, in turn denote the vertical edge that $B_{i}$ and $B_{i+1}$ (resp. $D_{j}$ and $D_{j+1}$) share in common by $e_i$ (resp. $e'_j$), and along the clockwise direction denote the rest edges of $B_{i}$ (resp. $D_{j}$) by $l_{i},r_i,e_{i-1},m_i,n_i$ for $i=1,2,\ldots,t_1+2$ (resp. $l'_{j},r'_j,e'_{j-1},m'_j,n'_j$ for $j=1,2,\ldots,t_2+2$) (see Fig. \ref{cata-defi}(c)). By Lemma \ref{ding} we can divide $\mathcal{M}(H)$ in $(t_1+2)(t_2+2)+1$ subsets:
        \begin{align*}
            \mathcal{M}_{i,j}(H)=\{M\in \mathcal{M}(H): e_i,e'_j\in M\}
        \end{align*}
        for $i=0,1,\ldots,t_1+2$ and $j=0,1,\ldots,t_2+2$, and $i=0$ if and only if $j=0$. By Eq. (\ref{equ1}), we have
        \begin{align}
            \label{tail3}
            Af(H,x)=&\sum_{ M\in {\mathcal{M}_{0,0}(H)}}{ x^{ af(H,M) } }+\sum_{i=1}^{t_1+2}\sum_{j=1}^{t_2+2}\sum_{ M\in {\mathcal{M}_{i,j}(H)}}{ x^{ af(H,M) } }\nonumber\\
            :=&Af_{0,0}(H,x)+\sum_{i=1}^{t_1+2}\sum_{j=1}^{t_2+2}Af_{i,j}(H,x).
        \end{align}

        Given $M\in \mathcal{M}_{t_1+2,t_2+2}(H)$. Then $l_{t_1+2},l'_{t_2+2}\notin M$. On the one hand $l_{t_1+2}$ and $l'_{t_2+2}$ belong to compatible $M$-alternating hexagons $B_{t_1+2}$ and $D_{t_2+2}$ respectively. And on the other hand $\{l_{t_1+2},l'_{t_2+2}\}$ forces edges $e_{t_1+2},r_{t_1+2},m_{t_1+2},e'_{t_2+2},r'_{t_2+2},m'_{t_2+2}$ and anti-forces edges $e_{t_1+1},n_{t_1+2},e'_{t_2+1},n'_{t_2+2}$. By Lemma \ref{anti-forc-subs-calc} and Fig. \ref{tail3-proof1}(a) we know
        \begin{align*}
           af(H,M)=&af(H\circleddash \{l_{t_1+2},l'_{t_2+2}\},M\cap E(H\circleddash \{l_{t_1+2},l'_{t_2+2}\}))+2\\
           =&af(H^T_2,M\cap E(H^T_2))+2.
        \end{align*}
        It follows that
        \begin{align}
            \label{tail31}
            Af_{t_1+2,t_2+2}(H,x)=&\sum _{ M\in {\mathcal{M}_{t_1+2,t_2+2}(H)}}{ x^{ af(H^T_2,M\cap E(H^T_2))+2}}=\sum _{ M\in {\mathcal{M}(H^T_2)}}{x^{af(H^T_2,M)}}\cdot x^2\nonumber\\
            =&x^2Af(H^T_2,x).
        \end{align}

        \begin{figure}[htbp]
            \centering
            \includegraphics[height=2in]{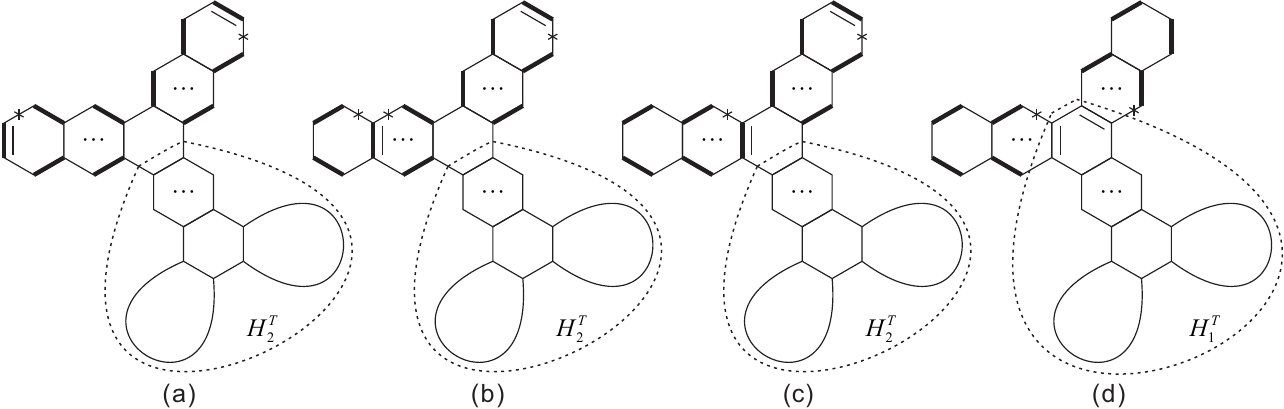}
            \caption{(a) $H\circleddash \{l_{t_1+2},l'_{t_2+2}\}$, (b) $H\circleddash \{r_{i+1},l_{i},l'_{t_2+2}\}$, (c) $H\circleddash \{r_{2},l'_{t_2+2}\}$, and (d) $H\circleddash \{r_{2},r'_{2}\}$.}
            \label{tail3-proof1}
        \end{figure}

        Given $M\in \mathcal{M}_{i,t_2+2}(H)$ for $i=2,3\ldots,t_1+1$. Then $r_{i+1},l_{i},l'_{t_2+2}\notin M$. On the one hand $r_{i+1}$, $l_{i}$ and $l'_{t_2+2}$ belong to compatible $M$-alternating hexagons $B_{i+1}$, $B_{i}$ and $D_{t_2+2}$ respectively. And on the other hand $\{r_{i+1},l_{i},l'_{t_2+2}\}$ forces edges $l_{i+1},n_{i+1},e_i,r_i,m_i,e'_{t_2+2},r'_{t_2+2},$\\$m'_{t_2+2}$ and anti-forces edges $e_{i+1},m_{i+1},n_i,e_{i-1},e'_{t_2+1},n'_{t_2+2}$. By Lemma \ref{anti-forc-subs-calc} and Fig. \ref{tail3-proof1}(b) we know
        \begin{align*}
           af(H,M)=&af(H\circleddash \{r_{i+1},l_{i},l'_{t_2+2}\},M\cap E(H\circleddash \{r_{i+1},l_{i},l'_{t_2+2}\}))+3\\
           =&af(H^T_2,M\cap E(H^T_2))+3.
        \end{align*}
        It follows that
        \begin{align}
            \label{tail32}
            Af_{i,t_2+2}(H,x)=&\sum _{ M\in {\mathcal{M}_{i,t_2+2}(H)}}{ x^{ af(H^T_2,M\cap E(H^T_2))+3}}=\sum _{ M\in {\mathcal{M}(H^T_2)}}{x^{af(H^T_2,M)}}\cdot x^3=x^3Af(H^T_2,x).
        \end{align}

        Given $M\in \mathcal{M}_{1,t_2+2}(H)$. Then $r_{2},l'_{t_2+2}\notin M$. On the one hand $r_{2}$ and $l'_{t_2+2}$ belong to compatible $M$-alternating hexagons $B_{2}$ and $D_{t_2+2}$ respectively. On the other hand $\{r_{2},l'_{t_2+2}\}$ forces edges $l_{2},n_{2},e_1,e'_{t_2+2},r'_{t_2+2},m'_{t_2+2}$ and anti-forces edges $e_{2},m_{2},e'_{t_2+1},n'_{t_2+2}$. By Lemma \ref{anti-forc-subs-calc} and Fig. \ref{tail3-proof1}(c) we know
        \begin{align*}
           af(H,M)=af(H\circleddash \{r_{2},l'_{t_2+2}\},M\cap E(H\circleddash\{r_{2},l'_{t_2+2}\}))+2=af(H^T_2,M\cap E(H^T_2))+2.
        \end{align*}
        It follows that
        \begin{align}
            \label{tail33}
            Af_{1,t_2+2}(H,x)=&\sum _{ M\in {\mathcal{M}_{1,t_2+2}(H)}}{ x^{ af(H^T_2,M\cap E(H^T_2))+2}}=\sum _{ M\in {\mathcal{M}(H^T_2)}}{x^{af(H^T_2,M)}}\cdot x^2\nonumber\\
            =&x^2Af(H^T_2,x).
        \end{align}

        By a similar argument to above, we know for $i=2,3,\ldots,t_1+1$ and $j=2,3,\ldots,t_2+1$
        \begin{align}
            \label{tail34}
            Af_{t_1+2,j}(H,x)&=Af_{1,j}(H,x)= Af_{i,1}(H,x)=x^3Af(H^T_2,x),\\
            Af_{t_1+2,1}(H,x)&=x^2Af(H^T_2,x),\\
            Af_{i,j}(H,x)&=x^4Af(H^T_2,x).
        \end{align}

        Given $M\in \mathcal{M}_{1,1}(H)$. Then $r_{2},r'_{2}\notin M$. On the one hand $r_{2}$ and $r'_{2}$ belong to compatible $M$-alternating hexagons $B_{2}$ and $D_2$ respectively. On the other hand $\{r_{2},r'_{2}\}$ forces edges $l_{2},n_{2},l'_{2},n'_{2}$ and anti-forces edges $e_{2},m_{2},e'_{2},m'_{2}$. By Lemma \ref{anti-forc-subs-calc} and Fig. \ref{tail2-proof}(d) we know
        \begin{align*}
           af(H,M)=af(H\circleddash \{r_{2},r'_{2}\},M\cap E(H\circleddash \{r_{2},r'_{2}\}))+2=af(H^T_1,M\cap E(H^T_1))+2.
        \end{align*}
        By a similar argument to the calculation of Eq. (\ref{tail23}) we have
        \begin{align}
            \label{tail35}
            Af_{1,1}(H,x)=&\sum _{ M\in {\mathcal{M}_{1,1}(H)}}{ x^{ af(H^T_1,M\cap E(H^T_1))+2}}=\sum _{M\in {\mathcal{M}(H^T_1)},~e_1,e'_{1}\in M}{x^{af(H^T_1,M)}}\cdot x^2 \nonumber \\
            =&x^2Af^{\ast}(H^T_1,x)=x^2Af(H^T_1,x)-x^3Af(H^T_3,x).
        \end{align}

        It remains to consider $Af_{0,0}(H,x)$. Denote $s$ the edge adjacent to $n_1$ and $m_1$, $t$ the edge adjacent to $e_0$ and $m_1$. Obviously there is a bijection $g$ between $\mathcal{M}_{0,0}(H)$ (see Fig. \ref{tail3-proof2}(a)) and $\{M\in \mathcal{M}(H^T_2):m_1\in M\}$ (see Fig. \ref{tail3-proof2}(b)) through replacing the $M$-alternating path $n_1m_2n_2\cdots m_{t_1+2}n_{t_1+2}e_{t_1+2}l_{t_1+2}r_{t_1+2}\cdots l_2r_2l_1m'_2n'_2\cdots m'_{t_2+2}n'_{t_2+2}e'_{t_2+2}l'_{t_2+2}r'_{t_2+2}\cdots l'_2r'_2e_0$\\by $g(M)$-matched edge $m_1$. Given $M\in \mathcal{M}_{0,0}(H)$ and a minimum anti-forcing set $S$ of $g(M)$ in $H^T_2$. Then $m_1\notin S$. We claim that $S\cup \{m_1\}$ is an anti-forcing set of $M$. Obviously every $M$-alternating cycle in $T_1\cup T_2$ must contain $m_1$, and every $M$-alternating cycle in $H^T_2$ must contain some edge in $S$. If there is no other $M$-alternating cycles in $H$, then the claim holds. Otherwise, denote a such one by $X$. Then $X$ must contain $M$-alternating paths $n_1s$ and $e_0t$. Since $X$ can be transformed into a $g(M)$-alternating cycle in $H^T_2$ through replacing the $M$-alternating path in $T_1\cup T_2$ starting from $n_1$ and ending at $e_0$ by $g(M)$-matched edge $m_1$, we have that $X$ must contain some edge in $S$. Then the claim holds in this case, which implies $af(H,M)\leqslant af(H^T_2,g(M))+1$.

        \begin{figure}[htbp]
            \centering
            \includegraphics[height=2in]{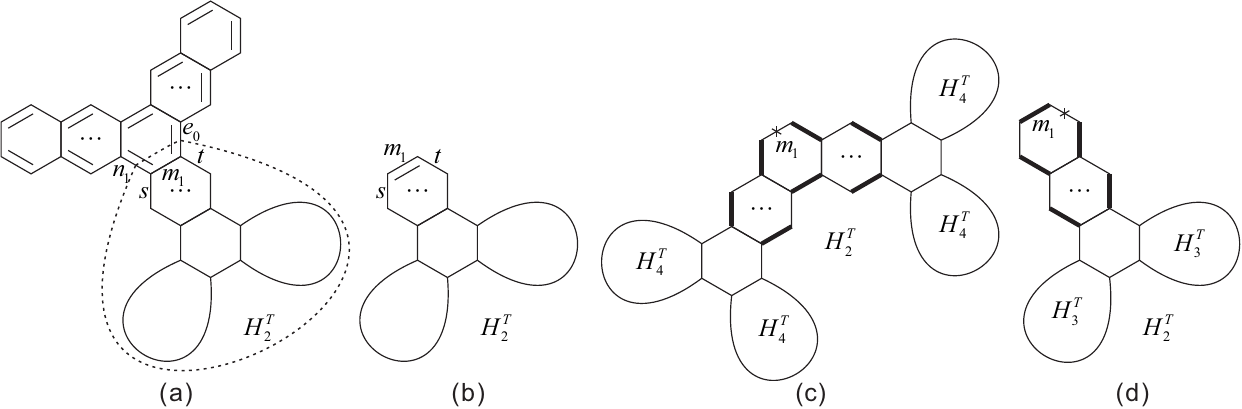}
            \caption{Illustration of calculation for $Af_{0,0}(H,x)$.}
            \label{tail3-proof2}
        \end{figure}

        Given a maximum compatible $g(M)$-alternating set $\mathcal{C}$ of $H^T_2$. We claim that $H$ contains a compatible $M$-alternating set with cardinality $|\mathcal{C}|+1$. Obviously every $g(M)$-alternating cycle in $\mathcal{C}$ that does not contain $m_1$ is also an $M$-alternating cycle of $H$, and the $M$-alternating cycle $h$ is compatible with the above every such one in $H$. If $\mathcal{C}$ does not contain $m_1$, then $\mathcal{C}\cup \{h\}$ is a compatible $M$-alternating set of $H$ and the claim holds. Otherwise, such one is unique since it contains $s$, hence we denote it by $Y$. Since $M$-alternating cycle $Z$ in $H$ through replacing $m_1$ by $n_1m_2n_2e_2l_2r_2l_1m'_2n'_2e'_2l'_2r'_2e_0$ is compatible with every member in $\mathcal{C}\setminus \{Y\}\cup \{h\}$, we have that $\mathcal{C}\setminus \{Y\}\cup \{h,Z\}$ is a compatible $M$-alternating set in $H$. Then the claim holds in this case, which implies $af(H,M)\geqslant af(H^T_2,g(M))+1$. It follows that $af(H,M)= af(H^T_2,g(M))+1$ and
        \begin{align*}
            Af_{0,0}(H,x)=\sum_{M\in\mathcal{M}_{0,0}(H)}{x^{af(H^T_2,g(M))+1}}=\sum_{M\in {\mathcal{M}(H^T_2)},~m_1\in M}{x^{af(H^T_2,M)}}\cdot x:=xAf^{\star}(H^T_2,x).
        \end{align*}

        If $t_3=0$, then by a similar argument to the calculation of Eq. (\ref{tail23}) and Fig. \ref{tail3-proof2}(c), we have
        \begin{align}
            \label{tail36}
            Af_{0,0}(H,x)=&\sum_{M\in \mathcal{M}(H^T_2)}{x^{af(H^T_2,M)}}\cdot x-\sum_{M\in {\mathcal{M}(H^T_2)},~m_1\notin M}{x^{af(H^T_2,M)}}\cdot x\nonumber\\
            =&xAf(H^T_2,x)-x^2Af(H^T_4,x).
        \end{align}
        If $t_3\geqslant 1$, then by a similar argument to the above and Fig. \ref{tail3-proof2}(d), we have
        \begin{align}
            \label{tail37}
            Af_{0,0}(H,x)=x^2Af(H^T_3,x).
        \end{align}
        Substituting Eqs. (\ref{tail31}-\ref{tail37}) into Eq. (\ref{tail3}), we immediately obtain Eq. (\ref{rela-tail3}).
    \end{proof}

    Deng and Zhang \cite{deng2} confirmed the continuity of anti-forcing spectrum for catacondensed hexagonal systems by analysing properties of graphs directly, and here we show the result by degrees of anti-forcing polynomial.

    \begin{cor}{\em\cite{deng2}}
        The anti-forcing spectrum of a catacondensed hexagonal system $H$ is continuous$,$ namely it is an integer interval$.$
    \end{cor}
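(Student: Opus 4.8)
By Lemma~\ref{coroll}(3), the anti-forcing spectrum of $H$ is precisely the set of exponents occurring with nonzero coefficient in $Af(H,x)$ -- call it the \emph{support} -- and since all these coefficients are nonnegative (they count perfect matchings), it suffices to show the support is a set of consecutive integers. The plan is to induct on the number $n$ of hexagons of $H$. The base case $n=1$ is clear: a single hexagon has $Af(H,x)=2x$, support $\{1\}$. For $n\geqslant2$, $H$ is either a linear hexagonal chain, in which case we take $h$ to be a terminal hexagon, or it contains an end kink or an end branched hexagon, which we take as $h$; either way $H$ has a tail $T(t_1,t_2,t_3)$, so Theorem~\ref{tail-anti-forc} applies. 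Every $H^T_i$ occurring there is either empty or a disjoint union of catacondensed hexagonal systems each with fewer hexagons than $H$ (possibly just one such system); hence, by the induction hypothesis together with the product formula $Af(G,x)=\prod Af(G_i,x)$ for disconnected $G$, the remark following it, and the convention $Af(\varnothing,x)=1$ (support $\{0\}$), every $Af(H^T_i,x)$ has support equal to an integer interval.

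The next step is to read off from (\ref{rela-tail2}) and (\ref{rela-tail3}) that $Af(H,x)$ again has interval support. The positive manipulations are routine: multiplying a nonnegative polynomial of interval support by $x^m$ translates the interval by $m$; multiplying two nonnegative polynomials of interval supports $[a,b]$ and $[c,d]$ gives support exactly $[a+c,b+d]$, the extreme coefficients being products of positive numbers and every intermediate exponent being realized; in particular $3x^2+2(t_1+t_2)x^3+t_1t_2x^4$ has interval support ($\{2\}$, $\{2,3\}$ or $\{2,3,4\}$, depending on how many of $t_1,t_2$ vanish), so $(3x^2+2(t_1+t_2)x^3+t_1t_2x^4)Af(H^T_2,x)$ has interval support; and a sum of nonnegative polynomials has support equal to the union of the individual supports, which is an interval as soon as that union is connected -- for instance when the summands can be ordered so that consecutive supports overlap or are adjacent.

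The real difficulty lies in the \emph{subtracted} terms: $-x^2Af(H^T_3,x)$ inside $\alpha_T$, $-x^3Af(H^T_3,x)$ in (\ref{rela-tail3}), and $-x^2Af(H^T_4,x)$ inside $\beta_T$ when $t_3=0$; the ``union of intervals'' principle does not apply to a difference, and a subtraction could in principle open a gap. To control this I would not expand the recurrence monolithically but keep the block decomposition from the proof of Theorem~\ref{tail-anti-forc}, namely $Af(H,x)=\sum_i Af_i(H,x)$ in case~(1) and $Af(H,x)=Af_{0,0}(H,x)+\sum_{i,j}Af_{i,j}(H,x)$ in case~(2). Each block is a generating function over a subclass of perfect matchings, hence has nonnegative coefficients, and the identities $af(H,M)=af(G',M\cap E(G'))+c$ established there (with $G'$ the relevant $H^T_i$ and $c\in\{1,2,3\}$) present each block's support either as a translate of the spectrum of a smaller system -- automatically an interval -- or as a translate of a ``restricted'' spectrum $\{af(G',M):M\ni e\}$ of a smaller system, where $e$ is a vertical edge of a terminal hexagon of $G'$; the negative terms are exactly the corrections $Af(G',x)-x\,Af(G'',x)$ that compute these restricted polynomials. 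Two points then remain: (i)~the restricted polynomials have interval support, which I would obtain by enlarging the induction hypothesis to track them through the same recurrence; and (ii)~consecutive blocks have overlapping or adjacent supports, so their union is an interval -- here the explicit exponent shifts ($+1$ for the two extreme blocks against $+2$ for the interior ones in case~(1), and the analogous pattern in case~(2), with $Af_1$, respectively $Af_{0,0}$ and $Af_{1,1}$, straddling both levels) provide the bridging. I expect step~(ii) -- showing that the block carrying a negative correction always meets its neighbours so that no gap survives -- to be the main obstacle; it should reduce to a short case check using only the base case, the conventions for empty graphs, and the fact that a terminal hexagon always contributes perfect matchings attaining both ends of the relevant restricted spectrum.
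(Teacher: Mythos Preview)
Your plan is viable but diverges from the paper's argument in two respects worth noting.

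First, the paper does \emph{not} enlarge the induction hypothesis to cover the restricted polynomials $Af^\ast$ and $Af^\star$. Instead it introduces an auxiliary system $\hat H$ (or $\tilde H$) obtained from $H$ by deleting a single terminal hexagon from the chosen tail, applies the ordinary induction hypothesis to $\hat H$, and then compares $Af(H,x)$ with $Af(\hat H,x)$: the difference is a single term like $x^2Af(H^T_2,x)$ or $(2x^3+t_2x^4)Af(H^T_2,x)$, whose support sits inside or immediately adjacent to the (already known) spectrum of $\hat H$. This completely sidesteps your step~(i); the restricted spectra never need to be shown continuous on their own.

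Second, the bridging ingredient the paper relies on, and which your sketch does not isolate, is the bijection $g:\mathcal M(H^T_3)\to\{M\in\mathcal M(H^T_2):m_1\in M\}$ together with the two-sided estimate
\[
af(H^T_3,M)\ \le\ af(H^T_2,g(M))\ \le\ af(H^T_3,M)+1,
\]
proved by comparing compatible alternating sets and anti-forcing sets across the two systems. This single inequality forces $[i_2,j_2]\cup[i_3,j_3]$ to be an interval and, in the delicate subcases ($t_1=0$ with $t_2=-1$, and $t_1=t_2=0$), drives a short dichotomy (``either $af(H^T_2,g(M))=af(H^T_3,M)$ or $=af(H^T_3,M)+1$'') that supplies exactly the missing degree in the union of block supports. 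This is precisely the mechanism your step~(ii) would need; without it the ``overlap/adjacent'' check does not go through uniformly, because the block $Af_1$ (resp.\ $Af_{0,0}$, $Af_{1,1}$) carrying the subtraction can in principle lose its top degree.

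In short: your block-by-block strategy with a strengthened hypothesis can be made to work, but it is heavier than necessary. The paper's route---remove one hexagon, invoke induction on the resulting $\hat H$, and use the $\le1$ gap estimate between $H^T_2$ and $H^T_3$---avoids tracking restricted spectra altogether and localises the case analysis to a handful of lines.
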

    \begin{proof}
        We proceed it by induction on the number of hexagons $m$ of $H$. If $m=1$, then $H=T(-1,-1,-1)$ and has anti-forcing polynomial $2x$. Thus the anti-forcing spectrum is $\{1\}$. Suppose that the result holds for the cases of less than $m(\geqslant 2)$. Now we consider the case of $m$. Pick a tail $T(t_1,t_2,t_3)$ of $H$, where $t_1\geqslant 0$. By assumption, the anti-forcing spectrum of $H^T_k$ is continuous, say integer interval $[i_k,j_k]$ for $k=1,2,3,4$.

        Obviously there is a bijection $g$ between $\mathcal{M}(H^T_3)$ and $\{M\in \mathcal{M}(H^T_2):m_1\in M\}$, where $g(M)$ is obtained from $M$ by adding some $g(M)$-matched edges in $T_3$ such that $m_1$ is the vertical edge of $T_3$ contained in $g(M)$. Since every compatible $M$-alternating set of $H^T_3$ is also a compatible $g(M)$-alternating set of $H^T_2$, and the union of $\{s\}$ and an anti-forcing set of $M$ is an anti-forcing set of $g(M)$, we have that $af(H^T_3,M)\leqslant af(H^T_2,g(M))\leqslant af(H^T_3,M)+1$. Then the set of degrees of $Af^{\star}(H^T_2,x)$ is contained in $[i_3,j_3+1]$, and the set $[i_2,j_2]\cup [i_3,j_3]$ of degrees of $Af(H^T_2,x)+Af(H^T_3,x)$ is continuous.

        \textbf{Case 1.} $t_2=-1.$ Denote $\hat{H}$ the subgraph of $H$ by deleting $C_{t_1+2}$. Hence $\hat{H}$ is a catacondensed hexagonal system with $m-1$ hexagons and suppose its anti-forcing spectrum is an integer interval $[a,b]$.

        \textbf{Subcase 1.1.} $t_1\geqslant 1$. Pick the tail $T(t_1-1,-1,t_3)$ of $\hat{H}$ obtained from $T(t_1,-1,t_3)$ by deleting $C_{t_1+2}$. By the calculation of Eq. (\ref{rela-tail2}) we have
        \begin{align*}
            Af(H,x)=&(x+t_1x^2)Af(H^T_2,x)+xAf(H^T_3,x)+xAf^\ast(H^T_1,x),\\
            Af(\hat{H},x)=&(x+(t_1-1)x^2)Af(H^T_2,x)+xAf(H^T_3,x)+xAf^\ast(H^T_1,x)\\
            =&Af(H,x)-x^2Af(H^T_2,x).
        \end{align*}
        Obviously the degrees of $xAf(H^T_2,x)$ and degrees of $x^2Af(H^T_2,x)$ form $[i_2+1,j_2+1]$ and $[i_2+2,j_2+2]$, respectively. Since the existence of $xAf(H^T_2,x)$ in $Af(\hat{H},x)$, we know $a\leqslant i_2+1\leqslant j_2+1 \leqslant b$. If follows that the anti-forcing spectrum of $H$ is $[a,b+1]$ if $j_2+1=b$, and $[a,b]$ if $j_2+1<b$.

        \textbf{Subcase 1.2.} $t_1=0$. Then $\hat{H}=H^T_1$ and by a similar argument to the calculation of Eq. (\ref{rela-tail2}) we have
        \begin{align*}
            Af(H,x)=&xAf(H^T_2,x)+xAf(H^T_3,x)+xAf^\ast(H^T_1,x),\\
            Af(\hat{H},x)=&xAf(H^T_3,x)+Af^\ast(H^T_1,x).
        \end{align*}
        By assumption, we know that the union of $[i_3+1,j_3+1]$ and the set of degrees of $Af^\ast(H^T_1,x)$ form $[a,b]$. We claim that either $xAf(H^T_2,x)$ or $xAf^\ast(H^T_1,x)$ has degree $j_3+2$. If the claim holds, then the anti-forcing spectrum of $H$ is continuous since the union of set of degrees of $xAf(H^T_2,x)+xAf(H^T_3,x)$ and $\{j_3+2\}$ is an integer interval $[i_2+1,j_2+1]\cup [i_3+1,j_3+2]$.

        Given a perfect matching $M$ such that $af(H^T_3,M)=j_3$. If $af(H^T_2,g(M))=af(H^T_3,$\\$M)+1$, then $xAf(H^T_2,x)$ has degree $j_3+2$. If $af(H^T_2,g(M))=af(H^T_3,M)$, then $af(H^T_3,M)+2=af(H,d(M))$, where $d(M)\in \mathcal{M}_1(H)$ is obtained from $g(M)$ by adding $l_2,n_2,e_1,r_1$. This implies that $xAf^\ast(H^T_1,x)$ has degree $j_3+2$. Then the claim holds.

        \textbf{Case 2.} $t_2\geqslant 0$.

        \textbf{Subcase 2.1.} $t_1\geqslant 1$. Denote $\tilde{H}$ the subgraph of $H$ by deleting $B_{t_1+2}$. Hence $\tilde{H}$ is a catacondensed hexagonal system with $m-1$ hexagons. Pick the tail $T(t_1-1,t_2,t_3)$ of $\tilde{H}$ obtained from $T(t_1,t_2,t_3)$ by deleting $B_{t_1+2}$. By the calculation of Eq. (\ref{rela-tail3}) we have
        \begin{align*}
            Af(H,x)=&(3x^2+2(t_1+t_2)x^3+t_1t_2x^4)Af(H^T_2,x)+x^2Af^{\ast}(H^T_1,x)+xAf^{\star}(H^T_2,x),\\
            Af(\tilde{H},x)=&(3x^2+2(t_1+t_2-1)x^3+(t_1-1)t_2x^4)Af(H^T_2,x)+x^2Af^{\ast}(H^T_1,x)\\
            &+xAf^{\star}(H^T_2,x)\\
            =&Af(H,x)-(2x^3+t_2x^4)Af(H^T_2,x).
        \end{align*}
        By a similar argument to Subcase 1.1, if $t_2=0$, then the anti-forcing spectrum of $H$ is continuous since the existence of $3x^2Af(H^T_2,x)$ in $Af(\tilde{H},x)$; and if $t_2\geqslant 1$, then the anti-forcing spectrum of $H$ is continuous since the existence of $2(t_1+t_2-1)x^3Af(H^T_2,x)$ in $Af(\tilde{H},x)$.

        \textbf{Subcase 2.2.} $t_1=0$ and $t_2\geqslant 1$. Denote $\tilde{H}$ the subgraph of $H$ by deleting $D_{t_2+2}$. By a similar to Subcase 2.1, we know that the anti-forcing spectrum of $H$ is continuous.

        \textbf{Subcase 2.3.} $t_1=t_2=0$. Denote $\tilde{H}$ the subgraph of $H$ by deleting $D_{2}$. Hence $\tilde{H}$ is a catacondensed hexagonal system with $m-1$ hexagons and suppose its anti-forcing spectrum is $[p,q]$. Pick the tail $T(0,-1,t_3)$ of $\tilde{H}$ obtained from $T(0,0,t_3)$ by deleting $D_{2}$. By the calculation of Eqs. (\ref{rela-tail2},\ref{rela-tail3}) we have
        \begin{align*}
            Af(H,x)=&3x^2Af(H^T_2,x)+x^2Af^\ast(H^T_1,x)+xAf^\star(H^T_2,x),\\
            Af(\tilde{H},x)=&xAf(H^T_2,x)+xAf^\ast(H^T_1,x)+xAf(H^T_3,x).
        \end{align*}
        By assumption, we know that the union of the set of degrees of $xAf(H^T_2,x)+xAf^\ast(H^T_1,x)$ and $[i_3+1,j_3+1]$ form $[p,q]$. Note that the set of degrees of $xAf^\star(H^T_2,x)$ is contained in $[i_3+1,j_3+2]$. We claim that either $xAf^\star(H^T_2,x)$ or $3x^2Af(H^T_2,x)$ has degree $w+2$ for every $w\in [i_3,j_3]$. If the claim holds, then the anti-forcing spectrum of $H$ is continuous since the union of set of degrees of $3x^2Af(H^T_2,x)+x^2Af^\ast(H^T_1,x)$ and $[i_3+2,j_3+2]$ is an integer interval $[p+1,q+1]$.

        Given a perfect matching $M$ such that $af(H^T_3,M)=w$. If $af(H^T_2,g(M))=af(H^T_3,$\\$M)+1$, then $xAf^\star(H^T_2,x)$ has degree $w+2$. If $af(H^T_2,g(M))=af(H^T_3,M)$, then $3x^2Af(H^T_2,x)$ has degree $w+2$. Then the claim holds.
    \end{proof}

    We now give some anti-forcing polynomials of particular catacondensed hexagonal systems. Given a hexagonal chain with $n(\geqslant 1)$ maximal linear hexagonal chains and in which containing $r_1+2,r_2+2,\ldots,r_n+2$ ($r_i\geqslant 0$ for $i=1,2,\ldots,n$) hexagons in turn (see Fig. \ref{cata-eg}(b)). For convenience, we denote it by $Hl(r_1,r_2,\ldots,r_n)$, and make a convention that $Hl(r_1,r_2,\ldots,r_i,-1)=Hl(r_1,r_2,\ldots,r_i)$ for $i\geqslant 0$, $Hl(r_1,\ldots,r_0)$ is a hexagon, and $Hl(r_1,\ldots,r_{-1}-1)=Hl(r_1,\ldots,r_0-1)$ is an empty graph. Pick the tail $T(r_n,-1,t_3)$, where $t_3=-1$ if $n=1$ and $t_3=r_{n-1}$ if $n\geqslant 2$. Then $H^T_1=Hl(r_1,r_2,\ldots,r_{n-1})$, $H^T_2=Hl(r_1,r_2,\ldots,r_{n-1}-1)$, and $H^T_3=Hl(r_1,r_2,\ldots,r_{n-2}-1)$. Then we have the following conclusion.

    \begin{cor}{\em\cite{lei2}}
        \label{linear}
        The anti-forcing polynomial of a hexagonal chain $Hl(r_1,r_2,\ldots,r_n)$ with at least two hexagons is
        \begin{align*}
            Af(Hl(r_1,\ldots,r_n),x)=&(x+r_{n}x^2)Af(Hl(r_1,\ldots,r_{n-1}-1),x)\\
            +&(x-x^2)Af(Hl(r_1,\ldots,r_{n-2}-1),x)+xAf(Hl(r_1,\ldots,r_{n-1}),x),
        \end{align*}
        where $Af(Hl(r_1,r_2,\ldots,r_i,-1),x)=Af(Hl(r_1,r_2,\ldots,r_i),x)$ for $i\geqslant 0,$ $Af(Hl(r_1,\ldots,$\\$r_0),x)=2x,$ and $Af(Hl(r_1,\ldots,r_{-1}-1),x)=Af(Hl(r_1,\ldots,r_{0}-1),x)=1.$
    \end{cor}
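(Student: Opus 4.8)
The plan is to obtain this recurrence as the special case of Theorem~\ref{tail-anti-forc}(1) in which $H$ has no branched hexagons. First I would fix the tail. Since $Hl(r_1,\ldots,r_n)$ is a hexagonal chain, when $n=1$ it is linear and I take $h$ to be one of its two terminal hexagons, while when $n\geqslant 2$ the hexagon shared by the last two maximal linear hexagonal chains is an end kink (the maximal linear chain running into the $n$-th segment ends at a terminal hexagon) and I take it to be $h$; either way $H$ possesses the tail $T(r_n,-1,t_3)$ together with the subgraphs $H^T_1,H^T_2,H^T_3$ described just before the corollary, with $t_1=r_n\geqslant 0$, $t_2=-1$, and $t_3=-1$ if $n=1$, $t_3=r_{n-1}$ if $n\geqslant 2$. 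I would then verify the identifications $H^T_1=Hl(r_1,\ldots,r_{n-1})$, $H^T_2=Hl(r_1,\ldots,r_{n-1}-1)$, $H^T_3=Hl(r_1,\ldots,r_{n-2}-1)$ by tracking the three deletion operations: here $T_1\cup T_2=T_1$, so $H^T_1$ trims the last maximal linear segment down to $h$, $H^T_2$ additionally deletes $h$ (shortening the $(n-1)$-st segment by one hexagon), and $H^T_3$ additionally deletes all of $T_3$ (shortening the $(n-2)$-nd segment by one hexagon). The conventions $Hl(r_1,\ldots,r_i,-1)=Hl(r_1,\ldots,r_i)$, $Hl(r_1,\ldots,r_0)$ a hexagon, and $Hl(r_1,\ldots,r_{-1}-1)$ empty are exactly what makes these identifications uniform across the degenerate sub-cases (e.g. when $r_{n-1}=0$, deleting $h$ collapses the $(n-1)$-st segment, so $H^T_2=Hl(r_1,\ldots,r_{n-2},-1)=Hl(r_1,\ldots,r_{n-2})$; and for $n=2$ or $n=1$ the subgraphs $H^T_3$ and $H^T_2$ become empty, matching $Hl(r_1,\ldots,r_0-1)=Hl(r_1,\ldots,r_{-1}-1)$).

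With the tail fixed, I would substitute into Eq.~(\ref{rela-tail2}). Because $t_1=r_n\geqslant 0$, the correction term is $\alpha_T=xAf(H^T_1,x)+r_nx^2Af(H^T_2,x)-x^2Af(H^T_3,x)$, and collecting the coefficients of $Af(H^T_2,x)$ and of $Af(H^T_3,x)$ turns Eq.~(\ref{rela-tail2}) into $Af(H,x)=(x+r_nx^2)Af(H^T_2,x)+(x-x^2)Af(H^T_3,x)+xAf(H^T_1,x)$, which becomes the claimed recurrence once the three identifications are inserted. For the stated initial data I would note that a single hexagon $C_6$ has exactly two perfect matchings, each of whose only $M$-alternating cycle is the hexagon itself, so $af(C_6,M)=1$ for both and $Af(Hl(r_1,\ldots,r_0),x)=2x$; and $Af(Hl(r_1,\ldots,r_{-1}-1),x)=Af(Hl(r_1,\ldots,r_0-1),x)=1$ since these are empty graphs, for which $Af(\cdot,x)=1$ by the convention in Section~1.

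I do not anticipate a genuine obstacle, because Theorem~\ref{tail-anti-forc} already carries all the substantive content — the reduction of $af(H,M)$ to anti-forcing numbers in the smaller subgraphs via Lemmas~\ref{anti-forc-pend-edge-judg} and \ref{anti-forc-subs-calc}. The only thing demanding care is the bookkeeping in the first step: confirming that an end kink exists for $n\geqslant 2$, that $T_1\cup T_2$ is really $T_1$ for a chain (so that part~(1), not part~(2), of the theorem applies), and that the three deletions match the $Hl$-conventions in every degenerate sub-case. Past that, the statement is one substitution followed by a regrouping of terms.
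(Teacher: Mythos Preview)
Your proposal is correct and follows exactly the paper's approach: the paper explicitly picks the tail $T(r_n,-1,t_3)$ with $t_3=-1$ if $n=1$ and $t_3=r_{n-1}$ if $n\geqslant 2$, records the identifications $H^T_1=Hl(r_1,\ldots,r_{n-1})$, $H^T_2=Hl(r_1,\ldots,r_{n-1}-1)$, $H^T_3=Hl(r_1,\ldots,r_{n-2}-1)$, and then states the corollary as an immediate consequence. Your write-up is in fact more detailed than the paper's, which leaves the substitution into Eq.~(\ref{rela-tail2}) and the degenerate-case bookkeeping implicit.
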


    Denote the anti-forcing polynomials of graphs illustrated in Figs. \ref{cata-eg3}(a-c) by $P_n,Q_n,R_n$ ($n\geqslant 1$) , where the third graph consists of $n$ rows of maximal linear hexagonal chains of length two, the second graph is obtained from the third one by adding a hexagon above the first row or below the last row, and the first graph is obtained from the third one by adding a hexagon above the first row and adding a hexagon below the last row.

    \begin{figure}[htbp]
        \centering
        \includegraphics[height=1.8in]{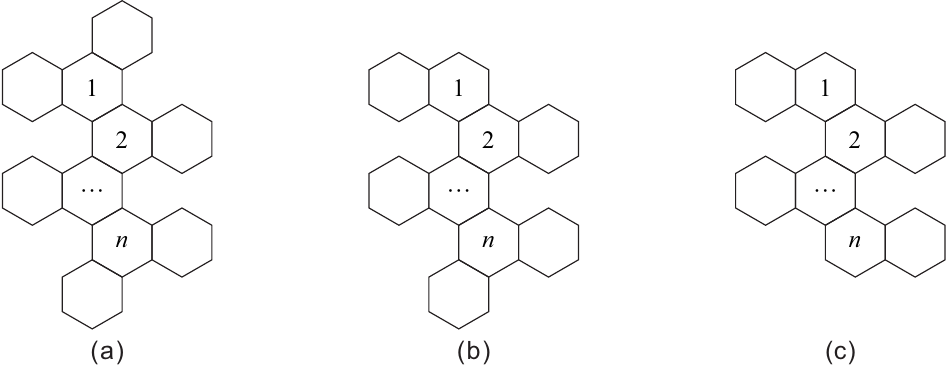}
        \caption{Examples of catacondensed hexagonal system.}
        \label{cata-eg3}
    \end{figure}

    \begin{eg}
        $P_n,Q_n,R_n$ have the following recurrence relations$:$
        \begin{align}
            \label{e1}
            P_n&=x^2P_{n-1}+(x+3x^2)Q_{n-1}-2x^4Q_{n-2}-2x^3Q_{n-3} ~~\text{ for $n\geqslant 3$},\\
            \label{e6}
            Q_n&=x^2Q_{n-1}+(x+3x^2)R_{n-1}-2x^4R_{n-2}-2x^3R_{n-3}~~ \text{ for $n\geqslant 3$},\\
            \label{e2}
            Q_n&=xP_{n-1}+xQ_{n-1}+(2x^2-2x^3)Q_{n-2} ~~\text{ for $n\geqslant 2$},\\
            \label{e3}
            R_n&=xQ_{n-1}+xR_{n-1}+(2x^2-2x^3)R_{n-2}~~\text{ for $n\geqslant 2$},
        \end{align}
        where $P_0=x^2+2x,$ $P_1=x^4+7x^3+x^2,$ $P_2=x^6+6x^5+11x^4+6x^3,$ $Q_0=2x,$ $Q_1=x^3+3x^2+x,$ $Q_2=x^5+4x^4+8x^3+x^2,$ $R_0=1,$ $R_1=x^2+2x,$ $R_2=x^4+2x^3+5x^2.$

        Furthermore$,$ for $n\geqslant 0$ the explicit expression of $R_n$ is
        \begin{align}
          R_n&=\sum_{k=0}^{n}\sum_{a=0}^{\lfloor\frac{n-k}{2}\rfloor}\sum_{c=\lceil\frac{n-k-2a}{2}\rceil}^{\lfloor\frac{n-k-a}{2}\rfloor}\binom{k+a}{a}\binom{a}{n-k-a-2c}\binom{k+c}{c}2^cx^{k+2a+2c}\nonumber\\
            &+\sum_{k=0}^{n-1}\sum_{a=0}^{\lfloor\frac{n-k-1}{2}\rfloor}\sum_{c=\lceil\frac{n-k-2a-1}{2}\rceil}^{\lfloor\frac{n-k-a-1}{2}\rfloor}\binom{k+a}{a}\binom{a}{n-k-a-2c-1}\binom{k+c}{c}2^cx^{k+2a+2c+1}\nonumber\\
            &-\sum_{k=0}^{n-2}\sum_{a=0}^{\lfloor\frac{n-k-2}{2}\rfloor}\sum_{c=\lceil\frac{n-k-2a-2}{2}\rceil}^{\lfloor\frac{n-k-a-2}{2}\rfloor}\binom{k+a}{a}\binom{a}{n-k-a-2c-2}\binom{k+c}{c}2^cx^{k+2a+2c+3}\nonumber\\
            \label{e5}
            &-\sum_{k=0}^{n-3}\sum_{a=0}^{\lfloor\frac{n-k-3}{2}\rfloor}\sum_{c=\lceil\frac{n-k-2a-3}{2}\rceil}^{\lfloor\frac{n-k-a-3}{2}\rfloor}\binom{k+a}{a}\binom{a}{n-k-a-2c-3}\binom{k+c}{c}2^cx^{k+2a+2c+3},
        \end{align}
        which can be applied to derive explicit expressions of $P_n$ and $Q_n$ by Eqs. (\ref{e2},\ref{e3})$.$

    \end{eg}
    \begin{proof}
        The recurrence relations Eqs. (\ref{e1},\ref{e6}) (resp. Eqs. (\ref{e2},\ref{e3})) can be obtained from Eq. (\ref{rela-tail3}) (resp. Eq. (\ref{rela-tail2})) directly. Furthermore from Eq. (\ref{e3}) for $n\geqslant 2$ we can derive
        \begin{align*}
            xQ_{n-1}=R_n-xR_{n-1}+(2x^3-2x^2)R_{n-2}.
        \end{align*}
        Substituting it into Eq. (\ref{e6}), for $n\geqslant 4$ we have the following recurrence relation:
        \begin{align*}
            R_n-(x+x^2)R_{n-1}-3x^2R_{n-2}+2x^4R_{n-3}+2x^4R_{n-4}=0.
        \end{align*}

        Define the generating function of $R_n$ as $W=\sum _{n=0}^{\infty}R_nt^n$. By the above equation we have
        \begin{align*}
            \sum_{n=4}^{\infty}R_nt^n-(x+x^2)\sum_{n=4}^{\infty}R_{n-1}t^n-3x^2\sum_{n=4}^{\infty}R_{n-2}t^n+2x^4\sum_{n=4}^{\infty}R_{n-3}t^n+2x^4\sum_{n=4}^{\infty}R_{n-4}t^n=0.
        \end{align*}
        Substituting $R_0=1,$ $R_1=x^2+2x,$ $R_2=x^4+2x^3+5x^2,$ and $R_3=x^6+3x^5+8x^4+10x^3$ into it, we have
        \begin{align*}
          &W-1-(x^2+2x)t-(x^4+2x^3+5x^2)t^2-(x^6+3x^5+8x^4+10x^3)t^3\\
          &-(x+x^2)t[W-1-(x^2+2x)t-(x^4+2x^3+5x^2)t^2]\\
          &-3x^2t^2[W-1-(x^2+2x)t]+2x^4t^3[W-1]+2x^4t^4W=0.
        \end{align*}
        Simplifying the above equation gives
        \begin{align*}
            W\left[1-\frac{xt}{(1-x^2t-x^2t^2)(1-2x^2t^2)}\right]=\frac{1+xt-x^3t^2-x^3t^3}{(1-x^2t-x^2t^2)(1-2x^2t^2)}.
        \end{align*}
        This yields
        \begin{align*}
          &\sum_{n=0}^{\infty}R_nt^n=\sum_{k=0}^{\infty}\frac{x^{k}t^k(1+xt-x^3t^2-x^3t^3)}{(1-x^2t-x^2t^2)^{k+1}(1-2x^2t^2)^{k+1}}\\
          =&\sum_{k=0}^{\infty}x^{k}t^k(1+xt-x^3t^2-x^3t^3)\sum_{a=0}^{\infty}\binom{k+a}{a}\sum_{b=0}^{a}\binom{a}{b}x^{2a}t^{a+b}\sum_{c=0}^{\infty}\binom{k+c}{c}2^cx^{2c}t^{2c}.
        \end{align*}
        Extracting the coefficient of $t^n$ from both sides of the above equation gives Eq. (\ref{e5}).
    \end{proof}

\end{document}